\documentclass[11pt]{article}
\usepackage{amsfonts,latexsym,amsmath,amssymb,amsthm}
\usepackage{algorithm}
\usepackage{xcolor}
\usepackage{dsfont}
\usepackage{abstract}
\usepackage{graphicx}
\usepackage[margin=1.1in]{geometry}
\usepackage{enumitem}
\definecolor{pred}{RGB}{148,55,61}
\usepackage[colorlinks,linkcolor=pred,citecolor=pred,anchorcolor=pred]{hyperref}

\newcommand{\ba}{\noindent $\begin{array}}
\newcommand{\ea}{\end{array}$}
\newcommand{\be}{\begin{equation}}
\newcommand{\ee}{\end{equation}}
\newcommand{\bd}{\begin{displaymath}}
\newcommand{\ed}{\end{displaymath}}
\newcommand{\beq}{\begin{eqnarray*}}
\newcommand{\eeq}{\end{eqnarray*}}
\newcommand{\beqn}{\begin{eqnarray}}
\newcommand{\eeqn}{\end{eqnarray}}
\newcommand*{\underuparrow}[1]{\ensuremath{\underset{{\uparrow}}{#1}}}
\DeclareMathOperator*{\argmin}{arg\,min}

\DeclareMathOperator*{\dom}{dom}

\def\hat{\widehat}
\def\tilde{\widetilde}
\def\[{\begin{equation}}
\def\]{\end{equation}}

\def\Diag{\textup{Diag}}
\def\norm#1{\|#1\|}
\def\inprod#1#2{\big\langle#1,\,#2\big\rangle}
\def\disp{\displaystyle}
\def\norm#1{\|#1 \|}
\def\inprod#1#2{\langle#1,\,#2 \rangle}
\def\cA{{\cal A}} \def\cB{{\cal B}} 
  \def\cP{{\cal P}} \def\cT{{\cal T}}
\def\cH{{\cal H}} \def\cU{{\cal U}} \def\cG{{\cal G}} 
 \def\cM{{\cal M}} \def\cN{{\cal N}} \def\cW{{\cal W}}
\def\cS{{\cal S}} \def\cE{{\cal E}} \def\cF{{\cal F}} \def\cV{{\cal V}}
 \def\cX{{\cal X}} \def\cY{{\cal Y}} \def\cZ{{\cal Z}}
\def\cN{{\cal N}} \def\cL{{\cal L}} \def\cS{{\cal S}}

\numberwithin{equation}{section}
\newtheorem{theorem}{Theorem}[section]
\newtheorem{proposition}{Proposition}[section]
\newtheorem{lemma}{Lemma}[section]

\newtheorem{remark}{Remark}[section]

\newtheorem{assumption}{Assumption}[section]

\title{\Large\bf A Unified Algorithmic Framework of Symmetric Gauss-Seidel Decomposition based Proximal ADMMs for Convex Composite Programming}
\date{\today}
\author{Liang Chen\thanks{College of Mathematics and Econometrics, Hunan University, Changsha, 410082, China ({\tt chl@hnu.edu.cn}), and Department of Applied Mathematics, The Hong Kong Polytechnic University, Hong Kong, China ({\tt liangchen@polyu.edu.hk}). This author was supported by the National Natural Science Foundation of China (11801158, 11871205), the Hunan Provincial Natural Science Foundation of China (2019JJ50040), and the Fundamental Research Funds for Central Universities in China.}
\and Defeng Sun\thanks{Department of Applied Mathematics, The Hong Kong Polytechnic University, Hung Hom, Hong Kong ({\tt defeng.sun@polyu.edu.hk}). This author was supported by a start-up research grant from the Hong Kong Polytechnic University.}
\and Kim-Chuan Toh\thanks{Department of
Mathematics, and Institute of Operations Research and Analytics, National University of Singapore, 10 Lower Kent Ridge Road, Singapore 119076 ({\tt mattohkc@nus.edu.sg}). This author was supported in part by the Ministry of Education, Singapore, Academic Research Fund (R-146-000-256-114).}
\and Ning Zhang\thanks{College of Computer Science and Technology, Dongguan University of Technology, Dongguan, China ({\tt ningzhang\_2008@yeah.net}), and Department of Applied Mathematics, The Hong Kong Polytechnic University, Hung Hom, Hong Kong, China (\tt ning.nz.zhang@polyu.edu.hk).}}
\begin{document}
\maketitle

\begin{abstract}
This paper aims to present a fairly accessible generalization of several symmetric Gauss-Seidel decomposition based multi-block proximal alternating direction methods of
multipliers (ADMMs) for convex composite optimization problems.
The proposed method unifies and refines many constructive techniques that were separately developed for the computational efficiency of multi-block ADMM-type algorithms.
Specifically, the majorized augmented Lagrangian functions, the indefinite proximal terms, the inexact symmetric Gauss-Seidel decomposition theorem, the tolerance criteria of approximately solving
the subproblems, and the large dual step-lengths, are all incorporated in one algorithmic framework, which we named as {\sf sGS-imiPADMM}.
From the popularity of convergent variants of multi-block ADMMs in recent years, especially for high-dimensional multi-block convex composite conic programming problems, the unification presented
in this paper, as well as the corresponding convergence results, may have the great potential of facilitating the implementation of many multi-block ADMMs in various problem settings.
\end{abstract}
\smallskip
{\small
\begin{center}
\parbox{0.95\hsize}{{\bf Keywords.}\;
Convex optimization,
multi-block,
alternating direction method of multipliers,
symmetric Gauss-Seidel decomposition,
majorization.
Majorization}
\end{center}
\begin{center}
\parbox{0.95\hsize}{{\bf AMS Subject Classification.}\; 90C25, 90C22, 90C06, 65K05.}
\end{center}}

\section{Introduction}
In this paper, we consider the following multi-block convex composite programming:
\[
\label{problem:primal}
\min_{x\in\cX,\,y\in\cY} \big\{
 p_{1}(x_1)+f(x_1,\ldots,x_{m})+q_{1}(y_1)+g(y_{1},\ldots,y_{n})\,
|\, \cA^* x+\cB^* y=c
\big\},
\]
where $\cX$, $\cY$ and $\cZ$ are three finite dimensional real Hilbert spaces, each endowed with an inner product $\langle\cdot,\cdot\rangle$ and its induced norm $\|\cdot\|$, and
\begin{itemize}
[topsep=1pt,itemsep=-.6ex,partopsep=1ex,parsep=1ex,leftmargin=4ex]
\item[-]
$\cX$ can be decomposed as the Cartesian product of $\cX_{1},\ldots, \cX_{m}$, which are finite dimensional real Hilbert spaces endowed with the inner product $\langle \cdot,\cdot\rangle$ inherited from $\cX$ and its induced norm $\|\cdot\|$. Similarly, $\cY=\cY_1\times\cdots\times\cY_n$. Based on such decompositions, one can write $x\in\cX$ as $x=(x_1,\ldots,x_m)$ with $x_i\in\cX_i,\ i=1,\ldots,m$, and, similarly, $y=(y_1,\ldots,y_n)$;
\item[-]
$p_{1}:\cX_1\to(-\infty,\infty]$ and $q_{1}:\cY_{1}\to(-\infty,\infty]$ are two closed proper convex functions;

\item[-]
$f:\cX\to(-\infty,\infty)$ and $g:\cY\to(-\infty,\infty)$ are continuously differentiable convex functions with Lipschitz continuous gradients;

\item[-]
$\cA^*$ and $\cB^*$ are the adjoints of the two given linear mappings $\cA:\cZ\to\cX$ and $\cB:\cZ\rightarrow \cY$, respectively; $c\in\mathcal{Z}$ is a given vector;

\item[-]
without {loss} of generality, we define the two functions $p:\cX\to(-\infty,\infty]$ and $q:\cY\to(-\infty,\infty]$ by $p(x):=p_{1}(x_{1}),\ \forall x\in\cX$ and $q(y):=q_{1}(y_{1}),\ \forall y\in\cY$ for convenience.

\end{itemize}

\medskip
At the first glance, one may view problem \eqref{problem:primal} as a $2$-block separable convex optimization problem with coupled linear equality constraints. Consequently, the classic alternating direction method of multipliers (ADMM) \cite{glowinski1975approximation,Gabay1976} and its contemporary variants such as \cite{eckstein1994,fazel2013hankel} can be used for solving problem \eqref{problem:primal}.
For the classic $2$-block ADMM, one may refer to \cite{Eckstein2015,Glowinski2014} for a history of the algorithm and to the recent note \cite{chennote} for a thorough study on its convergence properties.

In high-dimensional settings, it is usually not computationally economical to directly apply the $2$-block ADMM and its variants to solve problem \eqref{problem:primal}, as in this
case solving the subproblems at each ADMM iteration can be too expensive. The difficulty is
made more severe especially when we know that ADMMs, being
intrinsically first-order methods, are prone to require
 a large number of outer iterations to
 compute even a moderately accurate approximate solution.
As a result, further decomposition of the variables in problem \eqref{problem:primal} for getting easier subproblems, if possible, should be incorporated when designing ADMM-type methods for solving it.
Unfortunately, even if the functions $f$ and $g$ in problem \eqref{problem:primal} are separable with respect to each subspace, say, $\cX_i$ and $\cY_j$, the naive extension of the classic ADMM to multi-block cases is not necessarily convergent\cite{CHYY2016}. How to address the aforementioned issues is the key reason why the algorithmic development, as well as the corresponding convergence analysis,
of multi-block variants of the ADMM has been an important research topic in convex optimization.

Of course, it is not reasonable to expect finding a general algorithmic framework that can achieve sterling numerical performance on a wide variety of different classes of linearly constrained multi-block convex optimization problems.
Thus, in this paper our focus is on model \eqref{problem:primal},
which is already quite versatile, for the following two reasons. Firstly, this model is general enough to handle quite a large number of convex composite optimization models from both the core
convex optimization and realistic applications
\cite{li2016schur,chen}. Secondly, the convergence of multi-block variants of the ADMM for solving problem \eqref{problem:primal} has been separately realized in
\cite{STY2015,li2016schur,LiMajADMM2016,chen,zhangn,chen18},
without sacrificing the numerical performance when compared to the naively extended multi-block ADMM. The latter has long been served as a benchmark for
comparing new ADMM-type methods since its impressive numerical
performance has been well recognized in extensive numerical experiments, despite its
lack of theoretical convergence guarantee.
Currently, this line of ADMMs has been applied to many concrete instances of problem \eqref{problem:primal}, e.g., \cite{BAIQI2016,DQi2016,Lam2018,Wang2018,yangaoteo,yst2015,Ferreira2018,Song2018,SDPNALPLUS2017,QSDPNAL}, to name just a few.

Motivated by the above exposition, in this paper, we plan to propose a unified multi-block ADMM for solving problem \eqref{problem:primal}. Our unified method is an {\it inexact symmetric Gauss-Seidel} (sGS) {\it decomposition based majorized indefinite-proximal ADMM} ({\sf sGS-imiPADMM}).
The purpose of this study is to distill and synthesize all the practical techniques that were constructively exploited in the references mentioned above for the computational efficiency of multi-block ADMM-type algorithms. Specifically, our unified algorithm incorporates all the following ingredients
developed over the past few years:
\begin{itemize}[topsep=1pt,itemsep=-.6ex,partopsep=1ex,parsep=1ex,leftmargin=4ex]
\item[-] the inexact sGS decomposition theorem progressively developed in \cite{STY2015,li2016schur,lisgs};
\item[-] the majorized augmented Lagrangian functions introduced in \cite{LiMajADMM2016};
\item[-] the indefinite proximal terms studied in \cite{LiMajADMM2016,zhangn,chen18};
\item[-] the admissible stopping conditions of approximately solving the subproblems
developed in \cite{chen},
\end{itemize}
together with the large dual step-lengths for the classic ADMM \cite{glowinski1975approximation}.

We show that the proposed {\sf sGS-imiPADMM} is globally convergent under very mild assumptions, and the resulting convergence theorem also improves those in the highly related references mentioned above.
For instance, it refines the {\sf sGS-imsPADMM} in \cite{chen} by substituting the extra condition \cite[(5.26) of Theorem 5.1]{chen} for establishing the convergence with the weaker basic condition\cite[(3.2)]{chen}, which is imposed for the well-definedness of the algorithm.
Moreover, compared with the recently developed sGS decomposition based majorized ADMM with indefinite proximal terms in \cite{zhangn}, the problem setting for {\sf sGS-imiPADMM} is much more general as the functions $f$ and $g$ here are not restricted to have the separable structures that were required in \cite{zhangn}.

The rest of this paper is organized as follows. In section \ref{sec:pre}, we
introduce some notation, and recall the inexact sGS decomposition theorem which plays an important role in the subsequent algorithmic design.
In section \ref{sec:sgs-imipadmm}, the {\sf sGS-imiPADMM} algorithm for the multi-block problem \eqref{problem:primal} is formally proposed.
Its global convergence theorem is established in section \ref{sec:proof} via the convergence of an {\it inexact majorized indefinite-proximal ADMM} ({\sf imiPADMM}), which will be introduced in section \ref{sec:imipadmm}, in conjunction with the inexact sGS decomposition theorem. Finally, we conclude the paper in section \ref{sec:conclude}.

\section{Notation and Preliminaries}\label{sec:pre}

\subsection{Notation}
Let $\cU$ and $\cV$ be two arbitrary finite dimensional real Hilbert spaces each endowed with an inner product $\langle\cdot,\cdot\rangle$ and its induced norm $\|\cdot\|$.
For any given linear map $\cH:\cU\to\cV$, we use $\|\cH\|$ to denote its spectral norm and $\cH^*:\cV\to\cU$ to denote its adjoint linear operator.

If $\cU=\cV$ and $\cH$ is self-adjoint and positive semidefinite, then there exists a unique
self-adjoint positive semidefinite linear operator $\cH^{\frac{1}{2}}:\cU\to\cU$ such that
$\cH^{\frac{1}{2}}\cH^{\frac{1}{2}}=\cH$.
In this case, for any $u,v\in\cU$, we define $\langle u,v\rangle_{\cH}:=\langle u,\cH v\rangle$ and $\|u\|_\cH:=\sqrt{\langle u, \cH u\rangle}=\|\cH^{\frac{1}{2}}u\|$.

For a closed proper convex function $\theta:\cU\to(-\infty,+\infty]$,
we denote $\dom\theta$ and $\partial\theta$ for the effective domain and the subdifferential mapping of $\theta$, respectively.

Let $s>0$ be a given integer such that one can decompose $\cU$ as the Cartesian product of $\cU_{1},\ldots, \cU_{s}$, such that each $\cU_i$ is a finite dimensional real Hilbert space endowed with the inner product $\langle \cdot,\cdot\rangle$ inherited from $\cU$ and its induced norm $\|\cdot\|$.
Then, the self-adjoint positive semidefinite linear operator $\cH:\cU\to\cU$ can be symbolically decomposed as
\[
\label{eq-Hu}
\cH =
\begin{pmatrix}
\cH_{11}&\cH_{12}&\cdots &\cH_{1s}\\[2mm]
\cH^*_{12}&\cH_{22}&\cdots &\cH_{2s}\\[2mm]
\vdots&\vdots&\ddots&\vdots \\[2mm]
\cH^*_{1s}&\cH^*_{2s}&\cdots &\cH_{ss}
\end{pmatrix},
\]
where $\cH_{ij}:\cU_{j}\to\cU_{i}$,
$i,j= 1,\ldots,s$ are linear maps and $\cH_{ii},\,i=1,\ldots,s$ are self-adjoint positive semidefinite linear operators. Based on \eqref{eq-Hu}, we use $\cH_d:=\Diag(\cH_{11},\ldots,\cH_{ss})$ to denote the block-diagonal part of $\cH$, and denote the symbolically {strictly} upper triangular part of $\cH$ by $\cH_u$, so that $\cH=\cH_{d}+\cH_{u}+\cH_{u}^{*}$. To simplify the notation in this case, for any $u=(u_1,\ldots,u_s)\in\cU$ and $i\in\{1,\ldots,s\}$, we denote
$u_{\le i}:=\{u_1,\ldots,u_i\}$, $u_{\ge i}:=\{u_i,\ldots,u_s\}$.

\subsection{The inexact sGS decomposition theorem}
We now briefly review the inexact block sGS decomposition theorem in \cite{lisgs}, which is a generalization of the Schur complement based decomposition technique developed in \cite{li2016schur}.

Following the notation of the previous subsection, suppose that $\cU=\cU_{1}\times\cdots\times\cU_{s}$ and $\cH$ is symbolically decomposed as in \eqref{eq-Hu}. Let $\theta_1:\cU_{1}\to(-\infty,\infty]$ be a given closed proper convex function, and $b\in \cU$ be a given vector.
Define the convex quadratic function $h:\cU\to(-\infty,\infty)$ by
$$
h(u):=\frac{1}{2}\langle u,\cH u\rangle-\langle b,u\rangle,\quad \forall u\in\cU.
$$
Let $\tilde{\delta}_i,\delta_i\in\cU_i$, $i=1,\ldots,s$ be given (error tolerance) vectors with
$\tilde{\delta}_1=\delta_{1}$. With the assumption that $\cH_{d}$ is positive definite, we define
\begin{eqnarray}
\label{def:d-general}
d(\tilde\delta,\delta)
:=\delta +
\cH_{u}\cH_{d}^{-1}
(\delta- \tilde{\delta})
\quad\mbox{with}
\quad
\delta:=(\delta_1,\ldots,\delta_s)\ \mbox{and}\ \tilde\delta:=(\tilde\delta_1,\ldots\tilde\delta_s).
\end{eqnarray}
Suppose that $u^{-}\in\cU$ is a given vector. Define
\[
\label{steps}
\left\{
\begin{array}{lll}
\disp
\widetilde u_i
 := \argmin_{u_i}
\big\{\theta(u^{-}_1)
+h(u^{-}_{\le i-1},u_i,\widetilde u_{\ge i+1})
-\langle\widetilde\delta_i,u_i\rangle
\big\},
&i=s, \dots,2,
\\[2mm]
\disp
u_1^+
 :=\argmin_{u_1}\big\{\theta(u_1)+h(u_1,\widetilde u_{\ge2})-\langle\delta_1,u_1\rangle\big\},
\\[2mm]
\disp
u_i^{+}
 :=\argmin_{u_i}
\big\{
\theta(u_1^+)
+h(u^+_{\le i-1},u_i,\widetilde u_{\ge i+1})-\langle\delta_i,u_i\rangle\big\},
&i=2,\ldots,s.
\end{array}
\right.
\]
Meanwhile, define the self-adjoint positive semidefinite linear operator ${\rm sGS}(\cH):\cU\to\cU$ by
\[
\label{sgsop}
{\rm sGS}(\cH) :=\cH_{u}\cH_{d}^{-1}\cH_{u}^{*}.
\]
Now, consider the following convex composite quadratic optimization problem:
\[
\label{wplus}
\min_{u\in\cU}\left\{
\theta(u_1)
+h(u)
+\frac{1}{2}\|u-u^{-}\|_{{\rm sGS}(\cH)}^2
-\inprod{d(\tilde\delta,\delta)}{u} \right\}.
\]
The following sGS decomposition theorem from \cite[Theorem 1 $\&$ Proposition 1]{lisgs},
reveals the equivalence between the sGS iteration \eqref{steps} and the proximal minimization problem \eqref{wplus}. This theorem is essential for the algorithmic development in this paper.
\begin{theorem}
\label{prop:sgs}
Suppose that $\cH_{d}=\Diag(\cH_{11},\ldots,\cH_{ss})$ is positive definite. Then
\begin{itemize}[topsep=0pt,itemsep=-.6ex,partopsep=1ex,parsep=1ex]
\item[{\rm (i)}]
both the iteration process in \eqref{steps} and
the linear operator ${\rm sGS}(\cH)$ defined by \eqref{sgsop} are well-defined;

\item[{\rm (ii)}] problem \eqref{wplus} is well-defined and admits a unique solution, which is exactly the vector $u^+$ generated by \eqref{steps};

\item[{\rm (iii)}] it holds that $\hat\cH:=\cH+{\rm sGS}(\cH)=(\cH_{d}+\cH_{u})\cH_{d}^{-1}(\cH_{d}+\cH_{u}^{*})\succ 0$;

\item[{\rm (iv)}] the vector $d(\tilde{\delta},\delta)$ defined by \eqref{def:d-general} satisfies
$$
\|\hat\cH^{-\frac{1}{2}}d(\tilde{\delta},\delta)\|
\le \|\cH_{d}^{-\frac{1}{2}}(\delta-\tilde\delta)\|
+\|\cH_{d}^{\frac{1}{2}}(\cH_{d}+\cH_{u})^{-1}\tilde\delta\|.
$$
\end{itemize}
\end{theorem}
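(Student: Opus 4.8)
\medskip
\noindent\textbf{Proof proposal.} I would handle the four items in the order (i), (iii), (ii), (iv), since the well-definedness in (i) and the positive definiteness in (iii) are what make the proximal problem \eqref{wplus} and its unique solvability in (ii) meaningful, and the bound in (iv) falls out of the factorization of $\hat\cH$ obtained in (iii). For (i), note that $\cH_{d}\succ0$ is equivalent to $\cH_{ii}\succ0$ for every $i$; then in each line of \eqref{steps} the objective, regarded as a function of the single block $u_{i}$ currently being updated, is a closed proper convex function whose smooth quadratic part has Hessian $\cH_{ii}\succ0$ (the nonsmooth term $\theta$ being frozen in the backward sweep and in the forward updates for $i\ge2$, while in the $u_{1}^{+}$ update it contributes the closed proper convex function $\theta$ itself), so each one-block subproblem is strongly convex with a unique minimizer and \eqref{steps} is well-defined. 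The operator ${\rm sGS}(\cH)=\cH_{u}\cH_{d}^{-1}\cH_{u}^{*}$ is well-defined because $\cH_{d}^{-1}$ exists, and $\langle u,{\rm sGS}(\cH)u\rangle=\langle\cH_{u}^{*}u,\cH_{d}^{-1}\cH_{u}^{*}u\rangle\ge0$ for all $u$, so it is self-adjoint positive semidefinite.

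For (iii), using $\cH=\cH_{d}+\cH_{u}+\cH_{u}^{*}$ and $\cH_{d}=\cH_{d}^{*}$, a direct expansion gives $(\cH_{d}+\cH_{u})\cH_{d}^{-1}(\cH_{d}+\cH_{u}^{*})=\cH_{d}+\cH_{u}+\cH_{u}^{*}+\cH_{u}\cH_{d}^{-1}\cH_{u}^{*}=\cH+{\rm sGS}(\cH)=\hat\cH$. Since $\cH_{u}$ is strictly (block) upper triangular, $M:=\cH_{d}+\cH_{u}$ is block upper triangular with invertible diagonal blocks $\cH_{ii}$, hence invertible; combining invertibility of $M$ with $\cH_{d}^{-1}\succ0$ yields $\hat\cH=M\cH_{d}^{-1}M^{*}\succ0$.

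For (ii), by (iii) the objective of \eqref{wplus} is the sum of the closed proper convex function $u\mapsto\theta(u_{1})$ and a strongly convex quadratic, so \eqref{wplus} has a unique minimizer $u^{\star}$, characterized by $0\in\partial\theta(u_{1}^{\star})\times\{0\}\times\cdots\times\{0\}+\cH u^{\star}-b+{\rm sGS}(\cH)(u^{\star}-u^{-})-d(\tilde\delta,\delta)$. It remains to verify that the vector $u^{+}$ generated by \eqref{steps} satisfies this inclusion. My plan is: (a) write the first-order optimality conditions of the $2s-1$ one-block subproblems in \eqref{steps}, which amount to a subgradient inclusion with some $s_{1}\in\partial\theta(u_{1}^{+})$ for the $u_{1}^{+}$ step and to linear identities among the blocks $\cH_{ij}$ for all remaining steps; (b) set $\tilde u_{1}:=u_{1}^{+}$, put $v:=u^{+}-u^{-}$ and $w:=u^{+}-\tilde u$, and subtract the backward-sweep equation from the forward-sweep equation at each index $i\ge2$ to obtain $\cH_{d}w=(\delta-\tilde\delta)-\cH_{u}^{*}v$; (c) use the forward-sweep equations to express $\cH u^{+}-b=\cH_{u}w+\delta-\xi$ with $\xi:=(s_{1},0,\ldots,0)$; and (d) substitute $w=\cH_{d}^{-1}(\delta-\tilde\delta)-\cH_{d}^{-1}\cH_{u}^{*}v$ from (b) into (c) and recognize $\delta+\cH_{u}\cH_{d}^{-1}(\delta-\tilde\delta)=d(\tilde\delta,\delta)$ and $\cH_{u}\cH_{d}^{-1}\cH_{u}^{*}={\rm sGS}(\cH)$, which yields $\cH u^{+}-b+{\rm sGS}(\cH)v-d(\tilde\delta,\delta)=-\xi$, precisely the desired inclusion. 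I expect steps (b)--(d) --- the telescoping recombination of the two sweeps, where all the block-triangular bookkeeping resides --- to be the main obstacle of the whole proof.

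For (iv), rewrite $d(\tilde\delta,\delta)=\tilde\delta+(\cH_{d}+\cH_{u})\cH_{d}^{-1}(\delta-\tilde\delta)$ by splitting $\delta=\tilde\delta+(\delta-\tilde\delta)$ in \eqref{def:d-general}, apply the triangle inequality to $\hat\cH^{-1/2}d(\tilde\delta,\delta)=\hat\cH^{-1/2}\tilde\delta+\hat\cH^{-1/2}(\cH_{d}+\cH_{u})\cH_{d}^{-1}(\delta-\tilde\delta)$, and use the factorization $\hat\cH=M\cH_{d}^{-1}M^{*}$ with $M=\cH_{d}+\cH_{u}$ from (iii) to simplify the two pieces: $\|\hat\cH^{-1/2}\tilde\delta\|=\|\cH_{d}^{1/2}M^{-1}\tilde\delta\|=\|\cH_{d}^{1/2}(\cH_{d}+\cH_{u})^{-1}\tilde\delta\|$ and $\|\hat\cH^{-1/2}M\cH_{d}^{-1}(\delta-\tilde\delta)\|=\|\cH_{d}^{-1/2}(\delta-\tilde\delta)\|$, which gives the stated bound.
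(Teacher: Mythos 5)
Your argument is correct, and it is essentially the standard proof of this result: the paper itself does not prove Theorem \ref{prop:sgs} but quotes it from \cite{lisgs}, and your route --- strong convexity of each one-block subproblem for (i), the factorization $\hat\cH=(\cH_d+\cH_u)\cH_d^{-1}(\cH_d+\cH_u^*)$ for (iii), matching the stacked optimality conditions of the backward and forward sweeps (with $\tilde u_1:=u_1^+$, $\cH_d w=(\delta-\tilde\delta)-\cH_u^*v$, and $\cH u^+-b=\cH_u w+\delta-\xi$) against the optimality condition of \eqref{wplus} for (ii), and the splitting $d(\tilde\delta,\delta)=\tilde\delta+(\cH_d+\cH_u)\cH_d^{-1}(\delta-\tilde\delta)$ combined with that factorization for (iv) --- is the same argument given in that reference. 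No gaps; the telescoping step you flagged as the main obstacle indeed goes through exactly as you outlined.
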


\section{An Inexact sGS Decomposition Based Majorized Indefinite-Proximal ADMM}
\label{sec:sgs-imipadmm}
In this section, we present the {\sf sGS-imiPADMM} algorithm for solving problem \eqref{problem:primal}.
We first recall the majorization technique used in \cite{LiMajADMM2016} and the indefinite proximal terms used in
\cite{chen18}.
Since the two convex functions $f$ and $g$ in problem \eqref{problem:primal} are assumed to be continuously differentiable with Lipschitz continuous gradients, there exist two self-adjoint positive semidefinite linear operators $\hat\Sigma_{f}:\cX\to\cX$ and $\hat\Sigma_{g}:\cY\to\cY$ such that
\[
\label{def-hat-fg}
\left\{
\begin{array}{ll}
f(x) \le \hat{f}(x;x^\prime):=
f({x}^\prime)+\langle \nabla f({x}^\prime),x-x^\prime\rangle
+\frac{1}{2}\|x-x^\prime\|_{\hat{\Sigma}_f}^2,&\forall x,\,x'\in\cX,
\\[2mm]
g(y) \le \hat{g}(y;y^\prime):=
g({y}^\prime)+\langle\nabla g({y}^\prime),y-y^\prime\rangle
+\frac{1}{2}\|y-{y}^\prime\|_{\hat{\Sigma}_g}^2,&\forall y,\,y'\in\cY.
\end{array}
\right.
\]
For any given $\sigma>0$, 
the \emph{majorized proximal augmented Lagrangian function} associated with problem \eqref{problem:primal} is defined by
\begin{multline*}
\label{majorizedalf}
\tilde\cL_\sigma(x,y;(x^{\prime},y^{\prime},z')):
= p(x)+\hat f(x;x^{\prime})+q(y)+\hat g(y;y^{\prime})
+\langle z',\cA^*x+\cB^*y-c\rangle
\\
\qquad\qquad\qquad
+\frac{\sigma}{2}\|\cA^*x+\cB^*y-c\|^2
+\frac{1}{2}\|x-x^\prime\|_{\tilde\cS}^2
+\frac{1}{2}\|y-y^\prime\|_{\tilde\cT}^2,
\\
\forall\,(x,y )\in\cX\times\cY\quad \mbox{and} \quad \forall\, (x^{\prime},y^{\prime},z^{\prime})\,\in\cX\times\cY\times\cZ,
\end{multline*}
where $\tilde\cS:\cX\to\cX$ and
$\tilde\cT:\cY\to\cY$ are self-adjoint ({\it not necessarily positive semidefinite}) linear operators satisfying
\[
\label{indcondition1}
\tilde\cS \succeq-\frac{1}{2}\hat\Sigma_f
\quad\mbox{and}\quad
\tilde\cT \succeq-\frac{1}{2}\hat\Sigma_g.
\]

In order to apply the block sGS decomposition theorem, we symbolically decompose the positive semidefinite linear operators $\hat\Sigma_{f}$ and $\hat\Sigma_{g}$ defined in \eqref{def-hat-fg} into the following form, i.e.,
\[
\label{decsig}
\hat\Sigma_{f}=\begin{pmatrix}
(\hat\Sigma_{f})_{11}&(\hat\Sigma_{f})_{12}&\cdots &(\hat\Sigma_{f})_{1m}\\[3mm]
(\hat\Sigma_{f})^*_{12}&(\hat\Sigma_{f})_{22}&\cdots &(\hat\Sigma_{f})_{2m}\\[3mm]
\vdots&\vdots&\ddots&\vdots \\[3mm]
(\hat\Sigma_{f})^*_{1m}&(\hat\Sigma_{f})^*_{2m}&\cdots &(\hat\Sigma_{f})_{mm}
\end{pmatrix}
\
\mbox{and}\
\hat\Sigma_{g}=\begin{pmatrix}
(\hat\Sigma_{g})_{11}&(\hat\Sigma_{g})_{12}&\cdots &(\hat\Sigma_{g})_{1n}\\[3mm]
(\hat\Sigma_{g})^*_{12}&(\hat\Sigma_{g})_{22}&\cdots &(\hat\Sigma_{g})_{2n}\\[3mm]
\vdots&\vdots&\ddots&\vdots \\[3mm]
(\hat\Sigma_{g})^*_{1n}&(\hat\Sigma_{g})^*_{2n}&\cdots &(\hat\Sigma_{g})_{nn}
\end{pmatrix},
\]
which are consistent with the decompositions of $\cX$ and $\cY$.
Meanwhile, the linear operators $\cA$ and $\cB$ are also can be decomposed as
$$
\cA z=(\cA_1 z,\ldots,\cA_m z)
\quad\mbox{and}\quad
\cB z=(\cB_1 z,\ldots,\cB_n z),
$$
where $\cA_i z\in\cX_i$ and $\cB_j z\in\cY_j$, $\forall i\in\{1,\ldots,m\}$, $j\in\{1,\ldots,n\}$, and $z\in\cZ$.
Moreover, we can also decompose the linear operators
$\tilde\cS:\cX\to\cX$ and
$\tilde\cT:\cY\to\cY$ as the decompositions of $\widehat\Sigma_f$ and $\widehat\Sigma_g$ in \eqref{decsig}.
To ensure the block sGS decomposition theorem, we require
 \[
\label{indcondition2}
(\hat\Sigma_{f})_{ii}+\sigma\cA_i\cA_i^*+\tilde \cS_{ii}\succ 0,
\quad i=1,\ldots,m,
\quad\mbox{and}\quad
(\hat\Sigma_{g})_{jj}+\sigma\cB_j\cB_j^*+\tilde \cT_{jj}\succ 0,
\quad j=1,\ldots,n,
\]
where
$\tilde\cS_{ii}:\cX_i\to\cX_i,\, i=1,\ldots,m$ and
$\tilde\cT_{jj}:\cY_j\to\cY_j,\,j=1,\ldots,n$ are the block diagonal parts of $\cS$ and $\cT$, respectively.
We now formally present the promised {\sf sGS-imiPADMM}.

\begin{algorithm}[!ht]
\caption{{\bf ({\sf sGS-imiPADMM}):} An inexact sGS decomposition based majorized indefinite-proximal ADMM for solving problem \eqref{problem:primal}.\label{algo}}
\normalsize
Let $\tau\in(0,(1+\sqrt{5})/2)$ be the step-length and $\{\tilde\varepsilon_k\}_{k\ge 0}$ be a summable sequence of nonnegative numbers.
Let $(x^0,y^0,z^0)\in\dom p\times\dom q\times\cZ$ be the initial point. For $k=0,1,\ldots$, perform the following steps:
\begin{description}
\item[\bf Step 1a. ({\tt Backward GS sweep})] Compute for $i=m,\ldots,2$,
$$
\begin{array}{l}
\disp
\tilde x_{i}^{k+1}\approx{\argmin_{x_{i}\in\cX_{i}}}
\left\{
\tilde{\cL}_\sigma\left( (x^{k}_{\le i-1},x_{i},\tilde x^{k+1}_{\ge i+1}),y^{k}; (x^k,y^k,z^k)\right)
\right\},
\\[4mm]
\disp
\tilde\delta^k_i\in\partial_{x_{i}}\tilde{\cL}_\sigma
\left( (x^{k}_{\le i-1},\tilde x_{i}^{k+1},\tilde x^{k+1}_{\ge i+1}),y^{k};(x^k,y^k,z^k)\right)
\ \mbox{with}\ \norm{\tilde\delta^k_i} \leq \tilde\varepsilon_k.
\end{array}
$$
\item[\bf Step 1b. ({\tt Forward GS sweep})] Compute for $i=1,\ldots,m$,
$$
\begin{array}{l}
\disp
x_{i}^{k+1} \approx {\argmin_{x_{i}\in\cX_{i}}} \left\{\tilde{\cL}_\sigma\left(
(x^{k+1}_{\le i-1},x_{i},\tilde x_{\ge i+1}^{k+1}),y^{k}; (x^k,y^k,z^k)\right)\right\},
\\[4mm]
\disp
\delta^k_i \in \partial_{x_{i}}\tilde{\cL}_\sigma\left(
(x^{k+1}_{\le i-1},x_{i}^{k+1},\tilde x_{\ge i+1}^{k+1}),y^{k}; (x^k,y^k,z^k)\right)\ \mbox{with}\ \norm{\delta_{i}^{k}} \leq \tilde\varepsilon_k.
\end{array}
$$
\item[\bf Step 2a. ({\tt Backward GS sweep})] Compute for $j=n,\ldots,2$,
$$
\begin{array}{l}
\disp
\tilde y_{j}^{k+1} \approx {\argmin_{y_{j}\in\cY_{j}}} \left\{\tilde{\cL}_\sigma\left(x^{k+1},
(y_{\le j-1}^{k},y_{j},\tilde y^{k+1}_{\ge j+1}); (x^k,y^k,z^k)\right)\right\},
\\[4mm]
\disp
\tilde\gamma^k_j \in \partial_{y_{j}}\tilde{\cL}_\sigma\left(x^{k+1},
(y_{\le j-1}^{k},\tilde y^{k+1}_{j},\tilde y^{k+1}_{\ge j+1}); (x^k,y^k,z^k)\right)\ \mbox{with}\ \norm{\tilde\gamma^k_j} \leq \tilde\varepsilon_k.
\end{array}
$$
\item[\bf Step 2b. ({\tt Forward GS sweep})] Compute for $j=1,\ldots,n$,
$$
\begin{array}{l}
\disp
y_{j}^{k+1} \approx{\argmin_{y_{j}\in\cY_{j}}}\left\{\tilde{\cL}_\sigma\left(x^{k+1},(y_{\le j-1}^{k+1},y_{j},\tilde y^{k+1}_{\ge j+1}); (x^k,y^k,z^k)\right)\right\},
\\[4mm]
\disp
\gamma^k_j \in \partial_{y_{j}}\tilde{\cL}_\sigma\left(x^{k+1},(y_{\le j-1}^{k+1},y^{k+1}_{j},\tilde y^{k+1}_{\ge j+1}); (x^k,y^k,z^k)\right) \ \mbox{with}\ \norm{\gamma^k_j} \leq\tilde\varepsilon_k.
\end{array}
$$
\item[\bf Step 3.] Compute $z^{k+1}:=z^k+\tau\sigma(\cA^*x^{k+1}+\cB^* y^{k+1}-c).$
\end{description}
\end{algorithm}

Following the discussions in \cite{chen}, here we define two linear operators $\tilde\cM:\cX\to\cX$ and $\tilde\cN:\cY\to\cY$ as follows:
\[
\label{eq-MN}
\left\{
\begin{array}{ll}
\tilde\cM:= \hat\Sigma_f + \sigma \cA\cA^* + \tilde\cS,
\\[2mm]
\tilde\cN:= \hat\Sigma_g + \sigma \cB\cB^* + \tilde\cT.
\end{array}
\right.
\]
Just like the decomposition of $\widehat\Sigma_f$ and $\widehat\Sigma_g$ in \eqref{decsig},
we can symbolically decompose $\tilde\cM$ and $\tilde\cN$ accordingly.
We use $\tilde\cM_d$ and $\tilde\cN_d$ to denote the corresponding diagonal parts, and $\tilde\cM_u$ and $\tilde\cN_u$ to denote the strictly upper triangular parts, respectively.
Consequently, $\tilde\cM=\tilde\cM_{d}+\tilde\cM_{u}+\tilde\cM_{u}^{*}$ and $\tilde\cN=\tilde\cN_{d}+\tilde\cN_{u}+\tilde\cN_{u}^{*}$.

\begin{remark}
Note that the linear operators $\tilde\cS:\cX\to\cX$ and $\tilde\cT:\cY\to\cY$ are chosen for the purpose of
compensating the deviation from the majorized augmented Lagrangian function to the original augmented Lagrangian function.
Meanwhile, they should be chosen such that the minimization subproblems involving $p_{1}$ and $q_{1}$ are easier to solve.
With appropriately chosen $\tilde\cS_{11}$ and $\tilde\cT_{11}$, we can assume that the following well-defined optimization problems
$$
\begin{array}{l}
\disp
\min\limits_{x_1\in\mathcal{X}_1}
\left\{ p_1(x_{1})+\frac{1}{2}\|x_{1}-x_{1}'\|^{2}_{\tilde\cM_{11}} \right\}
\quad\mbox{and}\quad
{\min\limits_{y_1\in\mathcal{Y}_1}} \left\{ q_1(y_{1})+\frac{1}{2}\|y_{1}-y_{1}'\|^{2}_{\tilde\cN_{11}} \right\}
\end{array}
$$
can be solved to a sufficient accuracy in the sense
of returning approximate solutions with sufficiently small subgradients of the objective functions, for any given $x_{1}'\in\cX_{1}$ and $y_{1}'\in\cY_{1}$.
\end{remark}

\color{black}
Recall that the Karush-Kuhn-Tucker (KKT) system of problem \eqref{problem:primal} is given by
\[
\label{eq:kkt}
0\in\partial p(x)+\nabla f(x)+\cA z,
\quad
0\in\partial q(y)+\nabla g(y)+\cB z,
\quad
\cA^* x+\cB^* y=c.
\]
If $(\bar x,\bar y,\bar z)\in\cX\times\cY\times\cZ$ satisfies \eqref{eq:kkt}, from \cite[Corollary 30.5.1]{rocbook} we know that $(\bar x, \bar y)$ is an optimal solution to problem \eqref{problem:primal} and $\bar z$ is an optimal solution to the dual of this problem.
To simplify the notation, we denote the solution set of the KKT system \eqref{eq:kkt} for problem \eqref{problem:primal} by $\overline\cW$.

We now make the following assumption on problem \eqref{problem:primal} and Algorithm \ref{algo}.
\begin{assumption}
\label{ass} Assume that:
\begin{itemize}[topsep=1pt,itemsep=-.6ex,partopsep=1ex,parsep=1ex]
\item[\rm (i)] the solution set $\overline\cW$ to the KKT system \eqref{eq:kkt} of problem \eqref{problem:primal} is nonempty;
\item[\rm (ii)] the self-adjoint positive semidefinite linear operators $\hat\Sigma_f:\cX\to\cX$ and $\hat\Sigma_g:\cY\to\cY$ are chosen such that \eqref{def-hat-fg} holds;
\item[\rm (iii)] the self-adjoint linear operators $\tilde\cS$ and $\tilde\cT$ are chosen such that \eqref{indcondition1} and \eqref{indcondition2} hold.
\color{black}
\end{itemize}
\end{assumption}

Under Assumption \ref{ass}, we can define the following linear operators:
\[
\label{st}
\left\{
\begin{array}{ll}
 \cS_{\rm sGS}:=\tilde\cS+{\rm sGS}(\tilde \cM)
=\tilde\cS+\tilde\cM_u\tilde\cM_d^{-1}\tilde\cM_u^*,
\\[2mm]
 \cT_{\rm sGS}:=\tilde\cT+{\rm sGS}(\tilde \cN)
=\tilde\cT+\tilde\cN_u\tilde\cN_d^{-1}\tilde\cN_u^*.
 \end{array}
\right.
\]
Based on the above preparations, the global convergence of Algorithm \ref{algo} is given as the following theorem. The corresponding proof will be accomplished in section \ref{sec:proof}.

\begin{theorem} [Convergence of {\sf sGS-imiPADMM}]
\label{thm:conv}
Suppose that Assumption \ref{ass} holds, and the linear operators $\widetilde\cS$ and $\widetilde\cT$ are chosen such that
\[
\label{psdcond2}
\frac{1}{2}\hat\Sigma_f+\sigma\cA\cA^*+\cS_{\rm sGS}\succ 0
\quad\mbox{and}\quad
\frac{1}{2}\hat\Sigma_g+\sigma\cB\cB^*+\cT_{\rm sGS}\succ 0.
\]
Then the whole sequence $\{(x^k,y^k,z^k)\}$ generated by Algorithm \ref{algo} converges to a solution of the KKT system \eqref{eq:kkt} of problem \eqref{problem:primal}.
\end{theorem}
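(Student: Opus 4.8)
The plan is to prove Theorem~\ref{thm:conv} by exhibiting Algorithm~\ref{algo} as a particular inexact instance of a two-block \emph{majorized indefinite-proximal ADMM} ({\sf imiPADMM}) for problem~\eqref{problem:primal}, and then appealing to the convergence of that {\sf imiPADMM} (to be developed in Section~\ref{sec:imipadmm}). The bridge from the multi-block sweeps of Algorithm~\ref{algo} to a single two-block proximal update is Theorem~\ref{prop:sgs}.

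First I would fix $y=y^k$ together with the majorization/prox centre $(x^k,y^k,z^k)$ and collect the $x$-dependent part of $\tilde\cL_\sigma$. Using $\hat f(\cdot\,;x^k)$ from \eqref{def-hat-fg}, this part is a convex composite quadratic $p_1(x_1)+h_x^k(x)$, where $h_x^k(x)=\tfrac12\langle x,\tilde\cM x\rangle-\langle r_x^k,x\rangle$, with $\tilde\cM=\hat\Sigma_f+\sigma\cA\cA^*+\tilde\cS$ as in \eqref{eq-MN} and $r_x^k$ the vector assembled from $\nabla f(x^k),\,\cA z^k,\,\sigma\cA(\cB^*y^k-c),\,\hat\Sigma_f x^k,\,\tilde\cS x^k$. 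Condition \eqref{indcondition2} says exactly that $\tilde\cM_d\succ0$, so Theorem~\ref{prop:sgs} applies with $(\cH,\theta_1,b)=(\tilde\cM,p_1,r_x^k)$: the backward sweep of Step~1a (with subgradient residuals $\tilde\delta_i^k$) followed by the forward sweep of Step~1b (with residuals $\delta_i^k$, where implicitly $\tilde\delta_1^k=\delta_1^k$) is precisely the iteration \eqref{steps}, whence by Theorem~\ref{prop:sgs}(ii) the output $x^{k+1}$ is the unique minimiser of
$$
\min_{x\in\cX}\Big\{\,p_1(x_1)+h_x^k(x)+\tfrac12\|x-x^k\|^2_{{\rm sGS}(\tilde\cM)}-\langle d_x^k,x\rangle\,\Big\},\qquad d_x^k:=d(\tilde\delta^k,\delta^k).
$$
Because ${\rm sGS}(\tilde\cM)$ only augments $\tilde\cS$ to $\cS_{\rm sGS}=\tilde\cS+{\rm sGS}(\tilde\cM)$ (see \eqref{st}), this is exactly one \emph{inexact} $x$-update of an {\sf imiPADMM} whose $x$-proximal operator is $\cS_{\rm sGS}$, with the inexactness carried by the single vector $d_x^k$. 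The same reduction with $\tilde\cN=\hat\Sigma_g+\sigma\cB\cB^*+\tilde\cT$ converts Steps~2a--2b into an inexact $y$-update with proximal operator $\cT_{\rm sGS}$ and error vector $d_y^k:=d(\tilde\gamma^k,\gamma^k)$, while Step~3 is the ordinary multiplier update; thus $\{(x^k,y^k,z^k)\}$ is an {\sf imiPADMM} sequence.

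Next I would control the error vectors. Since $\|\tilde\delta_i^k\|,\|\delta_i^k\|,\|\tilde\gamma_j^k\|,\|\gamma_j^k\|\le\tilde\varepsilon_k$ with $\{\tilde\varepsilon_k\}$ summable, and since $\hat\cM:=\tilde\cM+{\rm sGS}(\tilde\cM)\succ0$, $\hat\cN:=\tilde\cN+{\rm sGS}(\tilde\cN)\succ0$ by Theorem~\ref{prop:sgs}(iii), the bound in Theorem~\ref{prop:sgs}(iv) gives $\|\hat\cM^{-1/2}d_x^k\|\le c_x\tilde\varepsilon_k$ and $\|\hat\cN^{-1/2}d_y^k\|\le c_y\tilde\varepsilon_k$ for constants $c_x,c_y$ depending only on $\tilde\cM$ and $\tilde\cN$; hence the errors are summable in the norms natural to the reformulated subproblems. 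Assumption~\ref{ass}(i)--(iii) supplies a nonempty $\overline\cW$, the majorization \eqref{def-hat-fg}, and the indefiniteness bounds \eqref{indcondition1}, while hypothesis \eqref{psdcond2} is exactly the positive-definiteness $\tfrac12\hat\Sigma_f+\sigma\cA\cA^*+\cS_{\rm sGS}\succ0$, $\tfrac12\hat\Sigma_g+\sigma\cB\cB^*+\cT_{\rm sGS}\succ0$ that the two-block scheme requires. Feeding these into the {\sf imiPADMM} convergence theorem yields that $\{(x^k,y^k,z^k)\}$ converges to a point in $\overline\cW$, which is the assertion.

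The crux is therefore not the reduction but proving convergence of the underlying {\sf imiPADMM}. That argument would proceed by: writing the (inexact) optimality systems of the $x$- and $y$-subproblems together with the multiplier recursion; exploiting $f\le\hat f(\cdot\,;x^k)$, $g\le\hat g(\cdot\,;y^k)$ and the gap estimates $\hat f(\cdot\,;x^k)-f\le\tfrac12\|\cdot-x^k\|^2_{\hat\Sigma_f}$, $\hat g(\cdot\,;y^k)-g\le\tfrac12\|\cdot-y^k\|^2_{\hat\Sigma_g}$ to trade half of $\hat\Sigma_f,\hat\Sigma_g$ against the possibly indefinite $\tilde\cS,\tilde\cT$ (this is where \eqref{indcondition1} and \eqref{psdcond2} are consumed); constructing a Fej\'er-type Lyapunov function---roughly $\tfrac1{\tau\sigma}\|z^k-\bar z\|^2$ plus $\tilde\cM$- and $\tilde\cN$-weighted squared distances of $(x^k,y^k)$ to a KKT point $(\bar x,\bar y)$, plus a cross term accommodating $\tau\in(0,(1+\sqrt{5})/2)$---and showing it decreases up to the summable error terms; and finally deducing $\|(x^{k+1},y^{k+1})-(x^k,y^k)\|\to0$ and $\|\cA^*x^{k+1}+\cB^*y^{k+1}-c\|\to0$, extracting a cluster point solving \eqref{eq:kkt}, and upgrading to convergence of the whole sequence by re-running the Lyapunov estimate anchored at that cluster point. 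The delicate points are choosing the Lyapunov weights so that the indefiniteness is absorbed precisely under \eqref{psdcond2}, and accommodating the step-length up to the golden ratio.
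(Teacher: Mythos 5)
Your proposal follows essentially the same route as the paper: it recasts Steps 1a--1b and 2a--2b of Algorithm~\ref{algo} via Theorem~\ref{prop:sgs} as one inexact two-block update with the augmented proximal operators $\cS_{\rm sGS}$ and $\cT_{\rm sGS}$ from \eqref{st} (this is exactly the content of Proposition~\ref{prop:equiv}), uses Theorem~\ref{prop:sgs}(iv) to bound the aggregated errors $d_x^k$, $d_y^k$ by summable multiples of $\tilde\varepsilon_k$ in the $\cM_{\rm sGS}$- and $\cN_{\rm sGS}$-weighted norms, and then invokes the convergence of the two-block {\sf imiPADMM} (Theorem~\ref{theorem:convergence}) under \eqref{psdcond2}. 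Your closing sketch of the two-block convergence argument also matches the paper's appendix (majorization trading of $\tfrac12\hat\Sigma_f$, $\tfrac12\hat\Sigma_g$ against the indefinite proximal terms, a quasi-Fej\'er Lyapunov function, cluster-point extraction, and re-anchoring at the cluster point), so the proposal is correct and not a genuinely different proof.
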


We end this section by comparing Algorithm \ref{algo} and its convergence theorem
(Theorem \ref{thm:conv}) with its precursors in \cite{li2016schur,chen,zhangn} for solving problem \eqref{problem:primal}. Such a comparison will clearly demonstrate from where the algorithm in this paper originates and to what extent does the progress made in this paper can reach. The details of the comparison are presented in the following table.
\begin{center}
\begin{tabular}{|c|c|c|c|c|}
\hline&&&&\\[-3mm]
{\bf Ref.} $\backslash$ {\bf Item} & $f$ \& $g$. &Majorization &Proximal Terms & Inexact
\\[.5mm]
\hline&&&&
\\[-4mm]
\cite{li2016schur} &
separable, quadratic&
no & semidefinite & no
\\[1mm]
\hline&&&&
\\[-4mm]
\cite{chen} & - & yes & semidefinite & yes\\[2mm]
\hline&&&&
\\[-4mm]
\cite{zhangn} & separable & yes & indefinite & no
\\[1mm]
\hline&&&&
\\[-3mm]
This paper &
-
& yes & indefinite & yes
\\[1mm]
\hline
\end{tabular}
\end{center}
Here, the column ``$f$ \& $g$'' indicates the additional conditions
imposed on the functions $f$ and $g$ in problem \eqref{problem:primal},
the column ``Majorization'' indicates whether the majorization technique was used,
the column ``Proximal Terms'' shows whether the proximal terms used are semidefinite or indefinite,
and the column ``Inexact'' shows whether the subproblems are allowed to be solved approximately.
It is easy to conclude from the above table that Algorithm \ref{algo} proposed in this paper generalizes all those in \cite{li2016schur,chen,zhangn}.
This explains why we name the {\sf sGS-imiPADMM} as a unified algorithmic framework. Here, it is also worthwhile to point out that even when the proximal terms in {\sf sGS-imiPADMM} are chosen to be positive semidefinite, the convergence theorem in this paper is sharper than that in \cite{chen}; see Remark \ref{rmk43} for the details.

\section{Convergence Analysis}
\label{sec:proof}
In this section, we will prove Theorem \ref{thm:conv} step-by-step.
We first show how to apply the sGS decomposition theorem to
reformulate the multi-block Algorithm \ref{algo} as
an abstract $2$-block ADMM-type algorithm. Then we establish the convergence properties of the later, and, as a consequence, prove Theorem \ref{thm:conv}.

\subsection{Basic convergence results}
\begin{proposition}
\label{prop:equiv}
Suppose that \eqref{indcondition1} and \eqref{indcondition2} hold.
Define for all $k\ge 0$,
$$
\tilde\delta^{k}:=(\tilde\delta_1^{k},\ldots,\tilde\delta_{m}^{k}),\ \,
\delta^{k}:=(\delta_1^{k},\ldots,\delta_{m}^{k}),\ \,
\tilde\gamma^{k}:=(\tilde\gamma_1^{k},\ldots,\tilde\gamma_{n}^{k}),\
\mbox{and}\
\gamma^{k}:=(\gamma_1^{k},\ldots,\gamma_{n}^{k})$$
with the convention that
$\tilde\delta_{1}^{k}:=\delta_{1}^{k}$ and $\tilde\gamma_{1}^{k}:=\gamma_{1}^{k}$.
 Then
\begin{itemize}[topsep=1pt,itemsep=-.6ex,partopsep=1ex,parsep=1ex,leftmargin=4ex]
\item[\rm (i)]
 the sequences $\{(x^{k},y^{k},z^{k})\}$,
$\{\delta^{k}\}$, $\{\tilde\delta^{k}\}$, $\{\gamma^{k}\}$ and $\{\tilde\gamma^{k}\}$ generated by the {\sf sGS-imiPADMM} are well-defined;

\item[\rm (ii)] the linear operators $\cS_{\rm sGS}$ and $\cT_{\rm sGS}$ in \eqref{st} are well-defined, and
\[
\label{mnsgs}
\cM_{\rm sGS}:=\hat\Sigma_{f}+\sigma\cA\cA^{*} + \cS_{\rm sGS}\succ 0,
\quad
\cN_{\rm sGS}:=\hat\Sigma_{g}+\sigma\cB\cB^{*} + \cT_{\rm sGS}\succ 0;
\]
\item[\rm (iii)] it holds that
$$
\left\{
\begin{array}{l}
d_x^{k}
\in\partial_{x}\left\{
\tilde\cL_\sigma\left(\underuparrow{x}^{k+1},y^{k}; (x^k,y^k,z^k)\right)
+\frac{1}{2}\|\underuparrow{x}^{k+1}-x^k\|^{2}_{{\rm sGS}(\widetilde\cM)}
\right\},
\\[5mm]
d_y^{k}
\in\partial_{y}\left\{
\tilde\cL_\sigma\left(x^{{k+1}},\underuparrow{y}^{k+1}; (x^k,y^k,z^k)\right)
+\frac{1}{2}\|\underuparrow{y}^{k+1}-y^k\|^{2}_{{\rm sGS}(\widetilde\cN)}
\right\},
\end{array}\right.
$$
where
$$
d_x^{k}:=\delta^{k}+\tilde\cM_{u}\tilde\cM_{d}^{-1}(\delta^{k}-\tilde\delta^{k})
\quad\mbox{and}\quad
d_y^{k}:=\gamma^{k}+\tilde\cN_{u}\tilde\cN_{d}^{-1}(\gamma^{k}-\tilde\gamma^{k});
$$
\item[\rm (iv)] one has that
$\|\cM_{\rm sGS}^{-\frac{1}{2}}d_{x}^{k}\|\le\kappa \tilde \varepsilon_{k}$,
$\|\cN_{\rm sGS}^{-\frac{1}{2}}d_{y}^{k}\|\le\kappa' \tilde\varepsilon_{k}$, where
\[
\label{def:kappa}
\left\{
\begin{array}{l}
\kappa := 2\sqrt{m-1}\|\tilde\cM_{d}^{-\frac{1}{2}}\|
+\sqrt{m}\|\tilde\cM_{d}^{\frac{1}{2}}(\tilde\cM_{d}+\tilde\cM_{u})^{-1}\|,
\\[3mm]
\kappa' := 2\sqrt{n-1}\|\tilde\cN_{d}^{-\frac{1}{2}}\|
+\sqrt{n}\|\tilde\cN_{d}^{\frac{1}{2}}(\tilde\cN_{d}+\tilde\cN_{u})^{-1}\|.
\end{array}
\right.
\]
\end{itemize}
\end{proposition}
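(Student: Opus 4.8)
My approach is to derive all four items from the inexact block sGS decomposition theorem (Theorem~\ref{prop:sgs}), applied once to the $x$-sweep of Steps~1a--1b and once to the $y$-sweep of Steps~2a--2b. The crucial preliminary step is to pin down the identification. Fixing the iteration index $k$ and freezing $y=y^k$ together with the majorization/prox center $(x^k,y^k,z^k)$, the map $x\mapsto\tilde\cL_\sigma(x,y^k;(x^k,y^k,z^k))-p(x)$ is a convex quadratic whose second-order term is $\tfrac{1}{2}\langle x,\tilde\cM x\rangle$ with $\tilde\cM=\hat\Sigma_f+\sigma\cA\cA^*+\tilde\cS$, because the terms $\langle z^k,\cA^*x\rangle$, $\langle\nabla f(x^k),\cdot\rangle$ and the linear pieces of the penalty and of the prox only feed the linear part. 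Calling this quadratic $h$, Steps~1a--1b are literally the iteration \eqref{steps} with $\theta_1:=p_1$, $\cH:=\tilde\cM$, $u^-:=x^k$ and error vectors $\tilde\delta^k,\delta^k$ (with $\tilde\delta_1^k=\delta_1^k$): since each subproblem is convex, the inclusion $\tilde\delta_i^k\in\partial_{x_i}\tilde\cL_\sigma(\cdots)$ at $\tilde x_i^{k+1}$ is exactly the optimality condition characterizing $\tilde x_i^{k+1}$ as the minimizer of that subproblem perturbed by $-\langle\tilde\delta_i^k,x_i\rangle$, which is the form in \eqref{steps}, and the forward sweep is the same. Moreover \eqref{indcondition1} yields $\tilde\cM\succeq\tfrac{1}{2}\hat\Sigma_f+\sigma\cA\cA^*\succeq0$ while \eqref{indcondition2} gives $\tilde\cM_d\succ0$, so $\tilde\cM$ meets the hypotheses of Theorem~\ref{prop:sgs}; the $y$-sweep is identified the same way with $\theta_1:=q_1$, $\cH:=\tilde\cN$, $u^-:=y^k$.

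Given this, (i) follows because Theorem~\ref{prop:sgs}(i) makes each subproblem in \eqref{steps} strongly convex and uniquely solvable, so the $\argmin$'s exist (and admissible approximate solutions, e.g. the exact ones with zero residual, exist too); adjoining $z^{k+1}$ from Step~3 and inducting on $k$ shows all five sequences are well-defined. For (ii), $\cS_{\rm sGS},\cT_{\rm sGS}$ in \eqref{st} are well-defined since $\tilde\cM_d,\tilde\cN_d$ are invertible, and $\cM_{\rm sGS}=\hat\Sigma_f+\sigma\cA\cA^*+\tilde\cS+{\rm sGS}(\tilde\cM)=\tilde\cM+{\rm sGS}(\tilde\cM)$, which by Theorem~\ref{prop:sgs}(iii) equals $(\tilde\cM_d+\tilde\cM_u)\tilde\cM_d^{-1}(\tilde\cM_d+\tilde\cM_u^*)\succ0$; symmetrically $\cN_{\rm sGS}=\tilde\cN+{\rm sGS}(\tilde\cN)\succ0$. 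I would record that $\cM_{\rm sGS}$ (resp. $\cN_{\rm sGS}$) is exactly the operator $\hat\cH$ of Theorem~\ref{prop:sgs} for $\cH=\tilde\cM$ (resp. $\cH=\tilde\cN$), for use below.

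For (iii), Theorem~\ref{prop:sgs}(ii) says the forward output $x^{k+1}$ is the unique solution of \eqref{wplus}, i.e. of $\min_x\{p(x)+h(x)+\tfrac{1}{2}\|x-x^k\|_{{\rm sGS}(\tilde\cM)}^2-\langle d(\tilde\delta^k,\delta^k),x\rangle\}$ with $d(\tilde\delta^k,\delta^k)=\delta^k+\tilde\cM_u\tilde\cM_d^{-1}(\delta^k-\tilde\delta^k)=d_x^k$; writing its optimality condition and re-inserting the $x$-independent part of $\tilde\cL_\sigma$ (which does not affect $\partial_x$) gives the first inclusion of (iii), evaluated at $x^{k+1}$, and the $y$-sweep gives the second. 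For (iv), Theorem~\ref{prop:sgs}(iv) with $\cH=\tilde\cM$, $d_x^k=d(\tilde\delta^k,\delta^k)$ and $\cM_{\rm sGS}=\hat\cH$ yields $\|\cM_{\rm sGS}^{-\frac{1}{2}}d_x^k\|\le\|\tilde\cM_d^{-\frac{1}{2}}(\delta^k-\tilde\delta^k)\|+\|\tilde\cM_d^{\frac{1}{2}}(\tilde\cM_d+\tilde\cM_u)^{-1}\tilde\delta^k\|$; since $\delta_1^k=\tilde\delta_1^k$ and every block of $\delta^k$ and of $\tilde\delta^k$ has norm $\le\tilde\varepsilon_k$, one has $\|\delta^k-\tilde\delta^k\|\le2\sqrt{m-1}\,\tilde\varepsilon_k$ and $\|\tilde\delta^k\|\le\sqrt{m}\,\tilde\varepsilon_k$, and pulling out the operator norms produces $\|\cM_{\rm sGS}^{-\frac{1}{2}}d_x^k\|\le\kappa\tilde\varepsilon_k$ with $\kappa$ as in \eqref{def:kappa}; the bound for $\cN_{\rm sGS}$, $d_y^k$ and $\kappa'$ is symmetric.

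The only genuinely delicate part is the bookkeeping in the first paragraph: one must verify carefully that the algorithm's residual-defined iterates coincide with those of \eqref{steps} when $\theta$ is taken to be $p$ (resp. $q$) while $h$ is taken to be \emph{only} the smooth quadratic part of $\tilde\cL_\sigma$, so that the Hessian identified is precisely $\tilde\cM$ (resp. $\tilde\cN$), the prox centers $x^k$ (resp. $y^k$) and the error vectors match, and the convention $\tilde\delta_1^k=\delta_1^k$ (resp. $\tilde\gamma_1^k=\gamma_1^k$) is honored. Once that matching is secured, (i)--(iv) are essentially direct transcriptions of items (i)--(iv) of Theorem~\ref{prop:sgs} plus the elementary norm estimates.
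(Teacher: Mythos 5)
Your proposal is correct and follows essentially the same route as the paper, which deliberately omits the detailed argument and points to Theorem \ref{prop:sgs} together with the proof of Proposition 3.1 in \cite{chen}: you identify each sweep with the inexact sGS iteration \eqref{steps} for $\cH=\tilde\cM$ (resp.\ $\tilde\cN$), verify $\tilde\cM\succeq 0$, $\tilde\cM_d\succ 0$ from \eqref{indcondition1}--\eqref{indcondition2}, and then read off (i)--(iv) from parts (i)--(iv) of the theorem plus the elementary block-norm estimates, exactly as intended. The only point worth noting is that the unique solvability of the subproblems comes from $\tilde\cM_d\succ 0$ and $\tilde\cN_d\succ 0$ (which you do verify) rather than from Theorem \ref{prop:sgs}(i) itself, a cosmetic attribution issue only.
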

\begin{remark}
It is easy to prove Proposition \ref{prop:equiv} via Theorem \ref{prop:sgs}.
We omit the detailed proof here since it is almost the same as that of \cite[Proposition 3.1]{chen}.
\end{remark}

\subsection{An inexact $2$-block majorized indefinite-proxiaml ADMM}
\label{sec:imipadmm}
Based on Proposition \ref{prop:equiv} and the previous efforts (see e.g. \cite{lisgs,chen}), here we also view the {\sf sGS-imiPADMM} as a $2$-block ADMM-type algorithm applied to problem \eqref{problem:primal} with {\bf intelligently constructed proximal terms}. For this purpose, we formally present the previously mentioned {\sf imiPADMM} as Algorithm \ref{algo:imipadmm}, where the \emph{majorized augmented Lagrangian function} associated with problem \eqref{problem:primal} is defined by
\begin{multline*}
\label{majorizedalf}
\widehat\cL_\sigma(x,y;(x^{\prime},y^{\prime},z')):
= p(x)+\hat f(x;x^{\prime})+q(y)+\hat g(y;y^{\prime})
+\langle z',\cA^*x+\cB^*y-c\rangle
\\
\qquad\qquad
+\frac{\sigma}{2}\|\cA^*x+\cB^*y-c\|^2,\quad
\forall\,(x,y )\in\cX\times\cY\quad \mbox{and} \quad \forall\, (x^{\prime},y^{\prime},z^{\prime})\,\in\cX\times\cY\times\cZ.
\end{multline*}

\begin{algorithm}[!ht]
\caption{ {\sf (imiPADMM):} An inexact majorized indefinite-proximal ADMM for solving problem \eqref{problem:primal}.\label{algo:imipadmm}}
\normalsize
Let $\tau\in(0,(1+\sqrt{5})/2)$ be the step-length and $\{\varepsilon_k\}_{k\geq 0}$ be a summable sequence of nonnegative numbers.
Choose the self-adjoint (not necessarily positive semidefinite) linear operators $\cS$ and $\cT$ such that
\[
\label{ineq:succ0}
\cS\succeq -\frac{1}{2}\widehat\Sigma_f,
\quad
\cT\succeq -\frac{1}{2}\widehat\Sigma_g,
\quad
\frac{1}{2}\hat{\Sigma}_f+\sigma\cA\cA^{*}+\cS\succ 0
\quad
\mbox{and}\quad
\frac{1}{2}\hat\Sigma_{g}+\sigma\cB\cB^{*}+\cT\succ 0.
\]
For $k=0,1,\ldots$, perform the following steps:
\begin{description}
\item[\bf Step 1.] Compute $x^{k+1}$ and $d_x^k\in\partial \psi_k(x^{k+1})$ such that $\norm{(\hat\Sigma_{f}+\sigma\cA\cA^{*} + \cS)^{-\frac{1}{2}}d^k_x} \leq \varepsilon_k$,
where
\begin{equation}
 \label{barxplus}
x^{k+1} \approx \bar{x}^{k+1} :=
\argmin_{x\in\cX}\left\{
\psi_k(x):=
\hat{\cL}_\sigma\left(x,y^k; (x^k,y^k,z^k)\right)+\frac{1}{2}\norm{x-x^k}_{\cS}^2\right\}.
\end{equation}
\item[\bf Step 2.] Compute $y^{k+1}$ and $d_y^k\in\partial \varphi_k(y^{k+1})$ such that $\norm{(\hat\Sigma_{g}+\sigma\cB\cB^{*}+\cT)^{-\frac{1}{2}}d^k_y} \leq \varepsilon_k$,
where
\begin{align}
y^{k+1} \approx \bar{y}^{k+1}:&=
\argmin_{y\in\cY}\left\{\varphi_k(y):=
\hat{\cL}_\sigma\left({x}^{k+1},y; (x^k,y^k,z^k)\right)+\frac{1}{2}\norm{y-y^k}_{\cT}^2\right\}
\label{baryplus}.
\end{align}
\item[\bf Step 3.] Compute $z^{k+1}:=z^k+\tau\sigma(\cA^*x^{k+1}+\cB^* y^{k+1}-c)$.
\end{description}
\end{algorithm}

Now, we are ready to present the convergence theorem of the {\sf imiPADMM}.
The proof of this theorem is postponed to Appendix \ref{sec:convergence}.

\begin{theorem}[Convergence of {\sf imiPADMM}]
\label{theorem:convergence}
Suppose that parts {\rm (i)} and {\rm (ii)} in Assumption \ref{ass} hold.
Then the sequence $\{(x^k,y^k,z^k)\}$ generated by Algorithm \ref{algo:imipadmm} converges to a point in $\overline\cW$, i.e., the solution set to the KKT system \eqref{eq:kkt} of problem \eqref{problem:primal}.
\end{theorem}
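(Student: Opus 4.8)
The plan is to follow the now-standard Lyapunov/Fej\'er-type analysis for majorized proximal ADMMs with indefinite proximal terms and large dual step-length, adapting the arguments from \cite{LiMajADMM2016,chen18,chen} to the inexact setting. Fix $(\bar x,\bar y,\bar z)\in\overline\cW$, which is nonempty by Assumption \ref{ass}(i), and write $x_e^k:=x^k-\bar x$, $y_e^k:=y^k-\bar y$, $z_e^k:=z^k-\bar z$, and $r^k:=\cA^*x^k+\cB^*y^k-c$. The first step is to write the inexact optimality (subdifferential) conditions for the two subproblems \eqref{barxplus}--\eqref{baryplus}: using the majorization \eqref{def-hat-fg} and the definition of $\widehat\cL_\sigma$, there exist $d_x^k\in\partial\psi_k(x^{k+1})$, $d_y^k\in\partial\varphi_k(y^{k+1})$ with the prescribed smallness bounds, and one extracts from these the relations
\[
-\nabla f(x^k)-\cA z^k-\sigma\cA(\cA^*x^{k+1}+\cB^*y^k-c)-(\hat\Sigma_f+\cS)(x^{k+1}-x^k)+d_x^k\in\partial p(x^{k+1}),
\]
and the analogous inclusion for $y$. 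Combining these with the $z$-update $z^{k+1}=z^k+\tau\sigma r^{k+1}$ and the KKT inclusions \eqref{eq:kkt} for $(\bar x,\bar y,\bar z)$, and invoking convexity (monotonicity of $\partial p$, $\partial q$ and of $\nabla f,\nabla g$), one obtains a basic inequality controlling the cross terms $\langle x_e^{k+1},\cdot\rangle$, $\langle y_e^{k+1},\cdot\rangle$.

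The second, and technically heaviest, step is to assemble these into a one-step decrease inequality for a suitable merit function of the form
\[
\Phi_k:=\frac{1}{\tau\sigma}\|z_e^k\|^2+\|x_e^k\|_{\cM_1}^2+\|y_e^k\|_{\cM_2}^2+\sigma\|\cB^*y_e^k\|^2,
\]
for appropriately chosen self-adjoint $\cM_1,\cM_2$ built from $\hat\Sigma_f,\hat\Sigma_g,\cS,\cT,\sigma\cA\cA^*,\sigma\cB\cB^*$. The key algebraic identities are the standard ADMM cross-term manipulations (e.g. $2\langle z_e^{k+1},r^{k+1}\rangle$ via the $z$-update, and rewriting $\langle r^{k+1},\cB^*(y^{k+1}-y^k)\rangle$) together with the elementary identity $2\langle a-b,Q(a-c)\rangle=\|a-b\|_Q^2+\|a-c\|_Q^2-\|b-c\|_Q^2$ applied with the indefinite operators; the majorization inequality \eqref{def-hat-fg} is used to dominate $f(x^{k+1})-f(\bar x)-\langle\nabla f(\bar x),\cdot\rangle$-type quantities by $\hat\Sigma_f$-weighted squares, which is precisely why the ``$\tfrac12\hat\Sigma_f$'' appears in the positive-definiteness conditions of \eqref{ineq:succ0}. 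The inexact terms $d_x^k,d_y^k$ contribute error terms bounded, via Cauchy--Schwarz and the norm bounds $\|(\hat\Sigma_f+\sigma\cA\cA^*+\cS)^{-1/2}d_x^k\|\le\varepsilon_k$, by $\varepsilon_k$ times $\sqrt{\Phi_k}$-type quantities; because $\{\varepsilon_k\}$ is summable, a Cauchy--Schwarz/Young splitting lets one absorb half of each error into the descent and leaves a summable remainder. The conclusion of this step is an inequality of the form
\[
\Phi_{k+1}+\Big(\text{nonneg. terms involving }\|x^{k+1}-x^k\|^2,\ \|y^{k+1}-y^k\|^2,\ \|r^{k+1}\|^2\Big)\le\Phi_k+\alpha_k,
\]
with $\sum_k\alpha_k<\infty$, where the nonnegativity of the bracketed ``gap'' terms is exactly where the conditions $\tfrac12\hat\Sigma_f+\sigma\cA\cA^*+\cS\succ0$, $\tfrac12\hat\Sigma_g+\sigma\cB\cB^*+\cT\succ0$ and the step-length restriction $\tau\in(0,(1+\sqrt5)/2)$ are consumed (the golden-ratio bound enters through a $2\times2$ matrix positivity argument on the coefficients of $\|r^{k+1}\|^2$ and $\|\cB^*(y^{k+1}-y^k)\|^2$, as in \cite{Glowinski2014,chen18}).

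The third step is the standard wrap-up. From the summable-perturbation descent inequality and a Robbins--Siegmund-type argument, $\{\Phi_k\}$ converges, hence $\{(x^k,y^k,z^k)\}$ is bounded, and $\sum_k\big(\|x^{k+1}-x^k\|^2+\|y^{k+1}-y^k\|^2+\|r^{k+1}\|^2\big)<\infty$, so in particular $x^{k+1}-x^k\to0$, $y^{k+1}-y^k\to0$, $r^{k+1}\to0$, and $z^{k+1}-z^k=\tau\sigma r^{k+1}\to0$. Take any convergent subsequence $(x^{k_j},y^{k_j},z^{k_j})\to(x^\infty,y^\infty,z^\infty)$; passing to the limit in the (inexact) optimality inclusions above, using $d_x^k\to0$, $d_y^k\to0$ (a consequence of $\varepsilon_k\to0$ and boundedness of the relevant operators), the successive-difference convergences, and the closedness of the graphs of $\partial p$, $\partial q$, shows that $(x^\infty,y^\infty,z^\infty)\in\overline\cW$. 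Finally, since this limit point lies in $\overline\cW$, it is an admissible choice of $(\bar x,\bar y,\bar z)$ in the definition of $\Phi_k$; as $\Phi_k$ (with this choice) converges and has a subsequence tending to $0$, it tends to $0$ along the whole sequence, which forces $(x^k,y^k,z^k)\to(x^\infty,y^\infty,z^\infty)$. The main obstacle is the bookkeeping in the second step: correctly choosing $\cM_1,\cM_2$ and the coefficients so that the ``gap'' terms are genuinely nonnegative under only \eqref{ineq:succ0} and $\tau<(1+\sqrt5)/2$, while simultaneously keeping the inexactness error terms absorbable — this is where all the hypotheses are used and where the argument differs in detail (not in spirit) from the exact, positive-semidefinite case.
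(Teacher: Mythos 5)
Your overall strategy coincides with the paper's: derive the inexact optimality inclusions, build a one-step quasi-Fej\'er (Lyapunov) inequality under \eqref{ineq:succ0} and $\tau\in(0,(1+\sqrt{5})/2)$, extract a subsequential KKT limit, and then re-anchor the Lyapunov function at that limit to force whole-sequence convergence. The genuine difference lies in how the inexactness is digested. You bound the error inner products directly by Cauchy--Schwarz/Young, which leads to a relative-error recursion of the type $\Phi_{k+1}\le\Phi_k+c\,\varepsilon_k\sqrt{\Phi_{k+1}}+(\text{summable})$ and requires a Robbins--Siegmund-type lemma; note that the terms $\langle d_x^k,x^{k+1}-\bar x\rangle$ and $\langle d_y^k,y^{k+1}-\bar y\rangle$ involve distances to the reference point, so they can only be handled by this relative-error mechanism, not absorbed into the successive-difference descent terms. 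The paper instead applies its key inequality \eqref{ineq:mainb} to the exact subproblem solutions $\bar x^{k+1},\bar y^{k+1}$ and transfers back to the computed iterates via Proposition \ref{prop:error} (a Moreau--Yosida nonexpansiveness bound giving $\|x^{k+1}-\bar x^{k+1}\|_{\cM}\le\varepsilon_k$), so the inexactness enters only as a perturbation linear in $\varepsilon_k$ at the level of $\|\xi^k\|=\sqrt{\phi_k(\bar w)}$, and the elementary Lemma \ref{lemma:sq-sum} plus the explicit bound \eqref{ineq:xibound} suffice; your route avoids introducing the exact iterates but pays with a heavier absorption argument. Two caveats on your sketch: (a) the merit function $\Phi_k$ as you wrote it cannot deliver the one-step decrease for $\tau\in[1,(1+\sqrt{5})/2)$; as in \eqref{def:phik} it must also carry the history terms $\hat\alpha\sigma\|r^k\|^2$ and $\alpha\|y^{k-1}-y^k\|^2_{\hat\Sigma_g+\cT}$, which is precisely how the $(1-\tau)\sigma\|r^{k+1}\|^2$ deficit is telescoped away in \eqref{ineq:rr}; (b) in your final step, convergence of the Lyapunov value to zero only yields $(\hat\Sigma_f+\cS)^{\frac{1}{2}}(x^k-x^\infty)\to0$, and $\hat\Sigma_f+\cS$ may be singular under \eqref{ineq:succ0}, so to conclude $x^k\to x^\infty$ you must additionally use $r^k\to0$ and $y^k\to y^\infty$ to get $\cA^*(x^k-x^\infty)\to0$ and then invoke $\cM=\hat\Sigma_f+\sigma\cA\cA^*+\cS\succ0$ from \eqref{mnpd}, exactly as the paper does at the end of its proof.
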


\begin{remark}
Even though the purpose of the proposed Algorithm \ref{algo:imipadmm} is to derive the convergence properties of Algorithm \ref{algo}, this $2$-block ADMM-type algorithm itself is a very general extension of the classic ADMM that contains many contemporary practical techniques, including the original large dual step-lengths in \cite{glowinski1975approximation},
the positive semidefinite proximal terms in \cite{fazel2013hankel},
the majorized augmented Lagrangian function and indefinite proximal terms in \cite{LiMajADMM2016}, and
the error tolerance criteria in \cite{chen}.
\end{remark}

\begin{remark}\label{rmk43}
If both $\mathcal{S}$ and $\mathcal{T}$ are chosen as positive semidefinite linear operators, Algorithm \ref{algo:imipadmm} reduces to Algorithm {\sf imsPADMM} in \cite{chen}.
Moreover, if the subproblems \eqref{barxplus} and \eqref{baryplus} in Algorithm \ref{algo:imipadmm} are solved exactly, i.e., by restricting $\varepsilon_k\equiv 0,\,\forall k\ge 0$, {\sf imiPADMM} here reduces to the {\sf Majorized iPADMM} proposed in \cite{LiMajADMM2016}.
However, in both \cite{chen} and \cite{LiMajADMM2016}, one requires\footnote{The precise definitions of ${\Sigma}_f$ and ${\Sigma}_g$ are given in \eqref{def-fg-low} of the Appendix.}
$$
\frac{1}{2} {\Sigma}_f+\cS+\sigma\cA\cA^{*}\succ 0
\quad
\mbox{and}\quad
\frac{1}{2} \Sigma_{g}+\cT+\sigma\cB\cB^{*}\succ 0,
$$
where $\Sigma_f\preceq \hat\Sigma_f$ and $\Sigma_g\preceq\hat\Sigma_g$, and
these conditions are in general
stronger than the last two conditions in \eqref{ineq:succ0}.
Therefore, even for $2$-block problems,
Theorem \ref{theorem:convergence} on the convergence of Algorithm \ref{algo:imipadmm}
has made its own progress on improving the convergence properties of the previously proposed algorithms
{\sf imsPADMM} and {\sf Majorized iPADMM} in \cite{chen} and \cite{LiMajADMM2016}, respectively.
As a result, for the multi-block problem \eqref{problem:primal}, Theorem \ref{thm:conv} can also be used to sharpen the convergence properties of the {\sf sGS-imsPADMM} in \cite{chen}.
\end{remark}

\subsection{Convergence of the {\sf sGS-imiPADMM}}
Now we are ready to prove Theorem \ref{thm:conv} for Algorithm \ref{algo} based on the connection between the {\sf sGS-imiPADMM} for the multi-block problem \eqref{problem:primal} and the {\sf imiPADMM} for the same problem but from the angle of viewing it as a $2$-block problem.

\medskip
\begin{proof}[Proof of Theorem \ref{thm:conv}]
Suppose that Assumption \ref{ass} holds.
Let $\cS:=\cS_{\rm sGS}$ and $\cT:=\cT_{\rm sGS}$, where $\cS_{\rm sGS}$ and $\cT_{\rm sGS}$ are given in \eqref{st}.
According to \eqref{psdcond2} we know that \eqref{ineq:succ0} holds.
Moreover, one has from \eqref{mnsgs} that
$\hat\Sigma_{f}+\sigma\cA\cA^{*} + \cS=\cM_{\rm sGS}$
and
$\hat\Sigma_{g}+\sigma\cB\cB^{*} + \cT=\cN_{\rm sGS}$.
Thus by Proposition \ref{prop:equiv}(iii), one has that $d_x^k\in\partial \psi_k(x^{k+1})$ and $d_y^k\in\partial \varphi_k(y^{k+1})$.
Meanwhile, we can define the sequence $\{\varepsilon_{k}\}$ in Algorithm \ref{algo:imipadmm} by $\varepsilon_{k}:=\max\{\kappa,\kappa'\}\tilde\varepsilon_{k},\, \forall k\ge 0$, which is summable due to the fact that the sequence $\{\tilde\varepsilon_k\}$ used in Algorithm \ref{algo} to control the
inexactness is summable, where $\kappa$ and $\kappa'$ are given in \eqref{def:kappa}.
Then, by Proposition \ref{prop:equiv}(iv), one has that $\norm{\cM_{\rm sGS}^{-\frac{1}{2}}d^k_x} \leq \varepsilon_k$ and $\norm{\cN_{\rm sGS}^{-\frac{1}{2}}d^k_y} \leq \varepsilon_k$.
Thus, since Assumption \ref{ass}(iii) holds, the sequence $\{(x^k,y^k,z^k)\}$ generated by Algorithm \ref{algo} is {\bf exactly} a sequence generated by Algorithm \ref{algo:imipadmm} with the specially constructed proximal terms $\cS=\cS_{\rm sGS}$ and $\cT=\cT_{\rm sGS}$.
Consequently, since parts (i) and (ii) in Assumption \ref{ass} hold, from Theorem \ref{theorem:convergence} we know that Theorem \ref{thm:conv} holds. This completes the proof.
\qed
\end{proof}
\section{Conclusions}\label{sec:conclude}
In this paper, we have developed a unified algorithmic framework, i.e., {\sf sGS-imiPADMM}, for solving
the multi-block convex composite programming problem \eqref{problem:primal}.
The proposed algorithm combines the merits from its various precursors by gathering the practical techniques developed for the purpose of improving the efficiency of ADMM-type algorithms.
The motivation behind such a unification is that, these techniques, including the majorization-type surrogates, inexact symmetric Gauss-Seidel decomposition, indefinite proximal terms, inexact computation of subproblems, and large dual step-lengths, have been shown to be very useful in dealing with convex composite programming problems. We established the global convergence of the {\sf sGS-imiPADMM} under very mild assumptions. We believe that the proposed algorithm can serve not only as a generalization or extension of the existing algorithms, but also provide a catalyst for enhancing the numerical performance of multi-block ADMM based solvers. We should mention that the linear convergence rate of {\sf sGS-imiADMM} is not discussed in this paper, but one should be able to establish such results following the works conducted in \cite{HSZ18,zhangn} without much difficulty.

\vspace{2mm}
\noindent
{\bf Acknowledgements.} The authors would like to thank Prof. Xudong Li and Dr. Xiaoliang Song for some helpful discussions on Algorithm \ref{algo}.


\appendix
\section{Convergence Analysis of the {\sf imiPADMM}}
\label{sec:convergence}
In this part, we prove Theorem \ref{theorem:convergence}. We begin
by introducing the notation and definitions that will be used throughout this section.
Then we establish a key inequality for obtaining the convergence of the algorithm.
After that, we turn to the convergence of the {\sf imiPADMM}.

\subsection{Additional notation and preliminaries}
Recall that $\cH$ is a finite dimensional real Euclidean space endowed with an inner product $\langle \cdot,\cdot\rangle$ and its induced norm $\|\cdot\|$.
Then, we have that
\[
\label{eq:triangle}
\begin{array}{l}
\langle u,v\rangle_\cH=\frac{1}{2}\left(\|u\|_\cH^2+\|v\|_\cH^2-\|u-v\|_\cH^2\right)=\frac{1}{2}\left(\|u+v\|_\cH^2-\|u\|_\cH^2-\|v\|_\cH^2\right).
\end{array}
\]
The following lemma was given in \cite[Lemma 3.2]{zhangn}.
\begin{lemma}
\label{lemma:zhangning}
Let $h:\cU\to(-\infty,\infty)$ be a smooth convex function and there is a self-adjoint positive semidefinite linear operator $\widehat\Sigma_h:\cU\to\cU$ such that, for any given $u'\in\cU$,
$$
h(u)\le h(u')+\langle \nabla h(u'), u-u'\rangle+\frac{1}{2}\|u-u'\|^2_{\widehat\Sigma_h},
\quad\forall u\in\cU.
$$
Then it holds that for any given $u'\in\cU$,
\[
\label{ineqz}
\langle \nabla h(u)-\nabla h(u'),v-u'\rangle
\ge-\frac{1}{4}\|v-u\|^2_{\widehat\Sigma_h},
\quad\forall u,v\in{\cU}.
\]
\end{lemma}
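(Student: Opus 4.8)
The plan is to reduce the asserted inequality to the classical co-coercivity property of smooth convex functions, applied in the norm weighted by $\hat\Sigma_h$. First I would introduce, for the fixed reference point $u'$, the shifted function $\phi(w):=h(w)-h(u')-\langle\nabla h(u'),w-u'\rangle$. Since $h$ is convex we have $\phi\ge 0$; moreover $\nabla\phi(w)=\nabla h(w)-\nabla h(u')$, so $\nabla\phi(u')=0$ and $u'$ is a global minimizer of $\phi$, and $\phi$ inherits the majorization $\phi(w)\le\phi(w')+\langle\nabla\phi(w'),w-w'\rangle+\frac12\|w-w'\|_{\hat\Sigma_h}^2$ directly from the hypothesis on $h$. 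In these terms the target inequality \eqref{ineqz} is exactly $\langle\nabla\phi(u),v-u'\rangle\ge-\frac14\|v-u\|_{\hat\Sigma_h}^2$.

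The key intermediate step is the $\hat\Sigma_h$-weighted co-coercivity estimate
$$
\langle\nabla\phi(w)-\nabla\phi(w'),\,w-w'\rangle\ \ge\ \big\|\hat\Sigma_h^{-\frac12}\big(\nabla\phi(w)-\nabla\phi(w')\big)\big\|^2,\qquad\forall\,w,w'\in\cU,
$$
which I would obtain as follows. When $\hat\Sigma_h\succ 0$, the change of variables $w=\hat\Sigma_h^{-\frac12}\tilde w$ turns the majorization into the statement that the gradient of the transformed function is $1$-Lipschitz, whence the displayed estimate is precisely the standard co-coercivity inequality for smooth convex functions (see, e.g., \cite{nes}). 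For a general positive semidefinite $\hat\Sigma_h$, I would first observe that the majorization forces $\nabla\phi(w)-\nabla\phi(w')\in{\rm range}(\hat\Sigma_h)$ for all $w,w'$ (otherwise $\phi$ would be unbounded below along a direction in $\ker\hat\Sigma_h$, contradicting $\phi\ge 0$), and then recover the estimate by applying the positive-definite case to $\hat\Sigma_h+\epsilon I$ and letting $\epsilon\downarrow 0$, where $\hat\Sigma_h^{-\frac12}$ is read as the Moore--Penrose square-root inverse on ${\rm range}(\hat\Sigma_h)$.

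With this in hand the conclusion follows from a one-line split. Writing $a:=\hat\Sigma_h^{-\frac12}\nabla\phi(u)$ and $c:=\|v-u\|_{\hat\Sigma_h}$, I would decompose
$$
\langle\nabla\phi(u),v-u'\rangle=\langle\nabla\phi(u),u-u'\rangle+\langle\nabla\phi(u),v-u\rangle,
$$
bound the first term below by $\|a\|^2$ using the co-coercivity estimate together with $\nabla\phi(u')=0$, and bound the second term below by $-\|a\|\,c$ via Cauchy--Schwarz applied to $\langle\hat\Sigma_h^{\frac12}a,\,v-u\rangle=\langle a,\,\hat\Sigma_h^{\frac12}(v-u)\rangle$ (legitimate because $\nabla\phi(u)\in{\rm range}(\hat\Sigma_h^{\frac12})$). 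Adding the two bounds gives $\langle\nabla\phi(u),v-u'\rangle\ge\|a\|^2-\|a\|\,c\ge-\tfrac14 c^2$, the last step being the elementary inequality $t^2-tc=(t-c/2)^2-c^2/4\ge-c^2/4$; since $\nabla\phi(u)=\nabla h(u)-\nabla h(u')$ and $c^2=\|v-u\|_{\hat\Sigma_h}^2$, this is exactly \eqref{ineqz}. The only genuine obstacle is the bookkeeping for a singular $\hat\Sigma_h$ — making the range condition and the limiting argument precise — while everything else is routine once the weighted co-coercivity is available.
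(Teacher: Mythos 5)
Your argument is correct. Note that the paper itself gives no proof of this lemma---it simply quotes it from \cite[Lemma 3.2]{zhangn}---so your proposal supplies a self-contained derivation, and it follows what is essentially the standard route: the hypothesis is a weighted descent-type upper bound, which for a convex function yields the $\widehat\Sigma_h$-weighted co-coercivity inequality (the classical equivalence from \cite{nes}, transported by the change of variables $w=\widehat\Sigma_h^{-1/2}\widetilde w$), and then the split $\langle\nabla\phi(u),v-u'\rangle=\langle\nabla\phi(u),u-u'\rangle+\langle\nabla\phi(u),v-u\rangle$ together with Cauchy--Schwarz and $t^2-tc\ge -c^2/4$ gives exactly the constant $\tfrac14$ in \eqref{ineqz}. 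All the individual steps check out, including the observation that $\phi\ge 0$ forces $\nabla\phi(w)\perp\ker\widehat\Sigma_h$, which legitimizes the Moore--Penrose reading of $\widehat\Sigma_h^{-1/2}$. One simplification worth noting: the entire bookkeeping for singular $\widehat\Sigma_h$ (range condition, pseudo-inverse, monotone limit inside the co-coercivity bound) can be avoided by running your whole argument with $\widehat\Sigma_h+\epsilon\mathcal{I}$, $\epsilon>0$ (the hypothesis still holds since $\widehat\Sigma_h\preceq\widehat\Sigma_h+\epsilon\mathcal{I}$), obtaining $\langle\nabla h(u)-\nabla h(u'),v-u'\rangle\ge-\tfrac14\|v-u\|^2_{\widehat\Sigma_h+\epsilon\mathcal{I}}=-\tfrac14\bigl(\|v-u\|^2_{\widehat\Sigma_h}+\epsilon\|v-u\|^2\bigr)$, and then letting $\epsilon\downarrow 0$ only in this final scalar inequality; this reduces the degenerate case to the positive definite one in a single line.
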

We now turn to problem \eqref{problem:primal} and Algorithm \ref{algo:imipadmm}.
Due to the convexity of $f$ and $g$, there exist two positive semidefinite linear operators $\Sigma_f$ ($\preceq\hat{\Sigma}_f$) and $\Sigma_g$ ($\preceq \hat{\Sigma}_g$) such that
\[
\label{def-fg-low}
\begin{array}{l}
f(x)\ge
f({x}^\prime)+\langle \nabla f({x}^\prime),x-x^\prime\rangle
+\frac{1}{2}\|x-x^\prime\|_{{\Sigma}_f}^2,\quad\forall x,\,x'\in\cX,
\\[2mm]
g(y)\ge
g({y}^\prime)+\langle\nabla g({y}^\prime),y-y^\prime\rangle
+\frac{1}{2}\|y-{y}^\prime\|_{{\Sigma}_g}^2,\quad\forall \,y, \,y'\in\cY.
\end{array}
\]
Since that the sequence $\{\varepsilon_k\}$ in Algorithm \ref{algo:imipadmm} is nonnegative and summable, we can define the following two real numbers
$$
\cE:=\sum_{k=0}^{\infty}\varepsilon_{k}\quad
\mbox{and}\quad \cE':=\sum_{k=0}^{\infty}\varepsilon^{2}_{k}.
$$
Suppose that both {\rm (i)} and {\rm (ii)} in Assumption \ref{ass} hold.
Then, an infinite sequence $\{(x^{k},y^{k},z^{k})\}$ can be generated by Algorithm \ref{algo:imipadmm}.
Meanwhile, there exist two sequences $\{\bar x^{k}\}$ and $\{\bar y^{k}\}$ defined by \eqref{barxplus} and \eqref{baryplus}, respectively. In this case, we define for any $k\ge 0$,
$$
\label{denote}
\left\{
\begin{array}{lll}
r^{k}:=\cA^*x^{k}+\cB^*y^k-c,
&
\bar r^{k}:=\cA^*\bar x^{k}+\cB^*\bar y^{k}-c,
\\[2mm]
\tilde z^{k+1}:=z^{k}+\sigma r^{k+1},&
\bar z^{k+1}:=z^{k}+\tau\sigma\bar r^{k+1}
\end{array}
\right.
$$
with the convention that $\bar x^0=x^0$ and $\bar y^0=y^0$, where
$\tau$ is the step-length used in Algorithm \ref{algo:imipadmm}.
Moreover, we define the following three constants:
\[
\label{alpha}
\left\{
\begin{array}{ll}
\alpha:=(1+{\tau}/{\min\{1+\tau,1+\tau^{-1}\}})/2,
\\[2mm]
\hat\alpha:=1-\alpha\min\{\tau,\tau^{-1}\},
\\[2mm]
\beta:=\min\{1,1-\tau+\tau^{-1}\}\alpha-(1-\alpha)\tau.
\end{array}
\right.
\]
Based on the above definitions, we have the following result.
\begin{proposition}
\label{prop:error}
Suppose that both {\rm (i)} and {\rm (ii)} in Assumption \ref{ass} hold.
Let $\{(x^k,y^k,z^k)\}$ be the sequence generated by Algorithm \ref{algo:imipadmm}, and $\{\bar x^{k}\}$, $\{\bar y^{k}\}$ be the sequence defined by \eqref{barxplus} and \eqref{baryplus}.
Then, for any $k\ge 0$, we have that
$$\|x^{k+1}-\bar x^{k+1}\|_{\cM}
\le\varepsilon_k$$
and
$$
\|y^{k+1}-\bar y^{k+1}\|_{\cN}
\le(1+\sigma\|\cN^{-\frac{1}{2}}\cB\cA^{*}\cM^{-\frac{1}{2}}\|)\varepsilon_k,
$$
where
\[
\label{mnpd}
\cM:=\hat\Sigma_{f}+\sigma\cA\cA^{*} + \cS\succ 0
\quad \mbox{and}\quad
\cN:=\hat\Sigma_{g}+\sigma\cB\cB^{*} + \cT\succ 0.
\]
\end{proposition}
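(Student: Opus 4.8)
The plan is to reduce both bounds to a single structural observation: by the majorization \eqref{def-hat-fg}, the subproblem objectives $\psi_k$ in \eqref{barxplus} and $\varphi_k$ in \eqref{baryplus} are convex composite quadratic functions whose smooth parts are \emph{exactly} quadratic, with Hessians $\cM=\hat\Sigma_f+\sigma\cA\cA^*+\cS$ and $\cN=\hat\Sigma_g+\sigma\cB\cB^*+\cT$ respectively. Both are positive definite by \eqref{mnpd} (which holds under part (ii) of Assumption \ref{ass} and the choice of $\cS,\cT$ in \eqref{ineq:succ0}), so $\bar x^{k+1}$ and $\bar y^{k+1}$ are well defined, and, after adding the convex nonsmooth terms $p$ and $q$, one obtains the elementary strong-monotonicity estimates $\langle\xi_1-\xi_2,u-v\rangle\ge\|u-v\|_{\cM}^2$ and $\langle\zeta_1-\zeta_2,w-w'\rangle\ge\|w-w'\|_{\cN}^2$, valid for all $u,v\in\cX$, $w,w'\in\cY$ and all $\xi_1\in\partial\psi_k(u)$, $\xi_2\in\partial\psi_k(v)$, $\zeta_1\in\partial\varphi_k(w)$, $\zeta_2\in\partial\varphi_k(w')$, since the gradient of the quadratic part of $\psi_k$ (resp.\ $\varphi_k$) is $\cM$-linear (resp.\ $\cN$-linear) while the subdifferential of $p$ (resp.\ $q$) is monotone.

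For the first inequality I would take $u=x^{k+1}$ with $\xi_1=d_x^k\in\partial\psi_k(x^{k+1})$ and $v=\bar x^{k+1}$ with $\xi_2=0\in\partial\psi_k(\bar x^{k+1})$ (Fermat's rule at the minimizer). This yields $\|x^{k+1}-\bar x^{k+1}\|_{\cM}^2\le\langle d_x^k,x^{k+1}-\bar x^{k+1}\rangle=\langle\cM^{-\frac{1}{2}}d_x^k,\cM^{\frac{1}{2}}(x^{k+1}-\bar x^{k+1})\rangle\le\|\cM^{-\frac{1}{2}}d_x^k\|\,\|x^{k+1}-\bar x^{k+1}\|_{\cM}$. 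The stopping rule in Step 1 of Algorithm \ref{algo:imipadmm}, which by \eqref{mnpd} reads $\|\cM^{-\frac{1}{2}}d_x^k\|\le\varepsilon_k$, then gives $\|x^{k+1}-\bar x^{k+1}\|_{\cM}\le\varepsilon_k$ after cancelling the common factor (the bound being trivial when it vanishes).

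For the second inequality I would split the error into two contributions combined by the triangle inequality in $\|\cdot\|_{\cN}$. The first contribution measures how far $y^{k+1}$ is from the exact minimizer of the $y$-subproblem actually solved in Step 2 (the one built from $x^{k+1}$): repeating the previous argument verbatim with $\varphi_k$, $\cN$, $d_y^k$ and the Step 2 stopping rule bounds it by $\varepsilon_k$. The second contribution measures the effect of substituting $x^{k+1}$ by $\bar x^{k+1}$ inside the $y$-subproblem, which is what the exact reference iterate carries, consistently with the exact primal residual $\bar r^{k+1}=\cA^*\bar x^{k+1}+\cB^*\bar y^{k+1}-c$. Subtracting the first-order optimality conditions of the two $y$-subproblems gives $0=(\eta-\eta')+\cN\Delta+\sigma\cB\cA^*(x^{k+1}-\bar x^{k+1})$, where $\Delta$ is the difference of the two exact $y$-minimizers and $\eta,\eta'$ are suitable subgradients of $q$ at them; pairing with $\Delta$, discarding $\langle\eta-\eta',\Delta\rangle\ge0$ by monotonicity of $\partial q$, and using the factorization $\cN^{-\frac{1}{2}}\cB\cA^*=(\cN^{-\frac{1}{2}}\cB\cA^*\cM^{-\frac{1}{2}})\cM^{\frac{1}{2}}$ together with the first inequality gives $\|\Delta\|_{\cN}\le\sigma\|\cN^{-\frac{1}{2}}\cB\cA^*\cM^{-\frac{1}{2}}\|\,\varepsilon_k$. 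Adding the two contributions produces the claimed constant $1+\sigma\|\cN^{-\frac{1}{2}}\cB\cA^*\cM^{-\frac{1}{2}}\|$.

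I expect the two routine pieces, namely the strong-monotonicity estimate and the $x$-bound, to be straightforward. The part that needs care is the second contribution to the $y$-bound: keeping precise track of which of $x^{k+1}$ and $\bar x^{k+1}$ feeds the $y$-subproblem, handling the nonsmooth term $q$ solely through monotonicity of $\partial q$, and absorbing the coupling term $\sigma\cB\cA^*(x^{k+1}-\bar x^{k+1})$ with exactly the operator norm $\|\cN^{-\frac{1}{2}}\cB\cA^*\cM^{-\frac{1}{2}}\|$ rather than a cruder bound.
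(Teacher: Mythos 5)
Your argument is correct and is essentially the proof the paper relies on: the paper simply defers to \cite[Proposition 3.1]{chen}, whose proof is exactly your strong-monotonicity (equivalently, proximal-map nonexpansiveness in the $\cM$- and $\cN$-norms) bound for the $x$-part, together with the triangle-inequality split of the $y$-error into the inexactness contribution and the perturbation caused by feeding $x^{k+1}$ instead of $\bar x^{k+1}$ into the $y$-subproblem, absorbed through the operator norm $\sigma\|\cN^{-\frac{1}{2}}\cB\cA^{*}\cM^{-\frac{1}{2}}\|$. Your reading that $\bar y^{k+1}$ is the exact update computed from $\bar x^{k+1}$ (even though \eqref{baryplus} literally displays $x^{k+1}$) is the intended one, consistent with the definition of $\bar r^{k+1}$ and with \cite{chen}; it is precisely what produces the cross term and what the subsequent convergence proof requires.
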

\begin{proof}
As a consequence of \eqref{ineq:succ0} in Algorithm \ref{algo:imipadmm}, \eqref{mnpd} holds.
Then, the subsequent proof can be easily completed via a few properties of the Moreau-Yosida mappings in \cite{lemar}, and one can refer to \cite[Proposition 3.1]{chen} and its proof for the details.
\qed
\end{proof}

The following result on a quasi-Fej\'er monotone sequence of real numbers will be used later.
\begin{lemma}
\label{lemma:sq-sum}
Let $\{a_k\}_{k\ge0}$ be a nonnegative sequence of real numbers satisfying $a_{k+1}\le a_k+\varepsilon_k$, $\forall k\ge 0$, where $\{\varepsilon_k\}_{k\ge 0}$ is a nonnegative and summable sequence of real numbers.
Then the {quasi-Fej\'er monotone sequence} $\{a_k\}$ converges to a unique limit point.
\end{lemma}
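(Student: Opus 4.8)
The plan is to reduce the statement to the elementary fact that a bounded monotone sequence of real numbers converges, by correcting $\{a_k\}$ with the tails of the summable series $\{\varepsilon_k\}$. First I would set $t_k:=\sum_{j=k}^{\infty}\varepsilon_j$, which is a finite nonnegative number for every $k$ because $\{\varepsilon_k\}$ is summable, and observe that $t_k\downarrow 0$ as $k\to\infty$ since $t_k$ is the tail of a convergent series. Then I would introduce the auxiliary sequence $b_k:=a_k+t_k\ge 0$.

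The key computation is that $\{b_k\}$ is non-increasing: using the hypothesis $a_{k+1}\le a_k+\varepsilon_k$ together with the identity $t_{k+1}=t_k-\varepsilon_k$, one obtains
$$
b_{k+1}=a_{k+1}+t_{k+1}\le(a_k+\varepsilon_k)+(t_k-\varepsilon_k)=a_k+t_k=b_k,\quad\forall\,k\ge 0.
$$
Since $b_k\ge a_k\ge 0$ for all $k$, the sequence $\{b_k\}$ is bounded below by $0$; being non-increasing and bounded below, it converges to some limit $a^\ast\ge 0$.

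Finally, I would recover the convergence of $\{a_k\}$ itself by writing $a_k=b_k-t_k$ and letting $k\to\infty$: since $b_k\to a^\ast$ and $t_k\to 0$, it follows that $a_k\to a^\ast$. Uniqueness of the limit is automatic for any convergent sequence in $\mathbb{R}$. There is no genuine obstacle in this argument; the only point that requires a moment of care is the well-definedness of the tail sums $t_k$ and the fact that $t_k\to 0$, which is precisely where the summability of $\{\varepsilon_k\}$ (as opposed to merely $\varepsilon_k\to 0$) is used.
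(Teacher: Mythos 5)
Your argument is correct: defining the tail sums $t_k:=\sum_{j\ge k}\varepsilon_j$ and showing that $b_k:=a_k+t_k$ is non-increasing and bounded below is exactly the standard proof of this quasi-Fej\'er monotonicity fact, and every step (the identity $t_{k+1}=t_k-\varepsilon_k$, the limit $t_k\to 0$, and the recovery $a_k=b_k-t_k\to a^\ast$) is sound. The paper itself states this lemma without proof, treating it as a known elementary result, so there is no alternative argument in the paper to compare against; your write-up supplies precisely the proof one would expect.
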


\subsection{The key inequality}
Now, we start to analyze the convergence of the {\sf imiPADMM} by studying some necessary results.
Let $\tau$ be the dual step-length in Algorithm \ref{algo:imipadmm} and $\alpha$ be the constant defined in \eqref{alpha}.
We define the following two linear operators
\[
\label{def:FG}
{\cF}:=\frac{1}{2}{\hat\Sigma_{f}}+\cS+\frac{(1-\alpha)\sigma}{2}\cA\cA^{*}\quad \mbox{ and}\quad
{\cG}:=\frac{1}{2}{\hat\Sigma_{g}}+\cT+\min\{\tau,1+\tau-\tau^2\}\alpha\sigma\cB\cB^{*},
\]
where $\hat\Sigma_{f}$ and $\hat\Sigma_{g}$ are given by \eqref{def-hat-fg}.
\begin{lemma}
\label{lem:fgpd}
Assume that \eqref{ineq:succ0} holds. For any $\tau\in(0,(1+\sqrt{5})/2)$, the constants $\alpha$, $\hat\alpha$ and $\beta$ defined by \eqref{alpha} satisfy
$0<\alpha<1$, $0<\hat\alpha<1$ and $\beta>0$.
Meanwhile,
the linear operators $\cF$ and $\cG$ defined by \eqref{def:FG} are positive definite.
\end{lemma}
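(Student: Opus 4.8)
The plan is to establish the three scalar inequalities first, since the positive definiteness of $\cF$ and $\cG$ will follow from them combined with the hypothesis \eqref{ineq:succ0}. The key scalar facts to verify are $0<\alpha<1$, $0<\hat\alpha<1$, and $\beta>0$, each for $\tau\in(0,(1+\sqrt5)/2)$.

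For the $\alpha$ bound, I would simply note that $\alpha=\tfrac12\bigl(1+\tau/\min\{1+\tau,1+\tau^{-1}\}\bigr)$ and analyze the ratio $\rho(\tau):=\tau/\min\{1+\tau,1+\tau^{-1}\}$ on the two regimes $\tau\le 1$ and $\tau\ge 1$. When $\tau\le 1$ the minimum is $1+\tau$, so $\rho=\tau/(1+\tau)\in(0,\tfrac12)$; when $\tau\ge 1$ the minimum is $1+\tau^{-1}$, so $\rho=\tau^2/(1+\tau)$, which is increasing and at $\tau=(1+\sqrt5)/2$ equals exactly $1$ (using $\tau^2=\tau+1$ for the golden ratio). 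Hence $\rho(\tau)\in(0,1)$ on the whole interval, giving $\alpha\in(\tfrac12,1)\subset(0,1)$. For $\hat\alpha=1-\alpha\min\{\tau,\tau^{-1}\}$: since $\min\{\tau,\tau^{-1}\}\le 1$ and $\alpha<1$, we get $\alpha\min\{\tau,\tau^{-1}\}<1$, so $\hat\alpha>0$; and $\hat\alpha<1$ because $\alpha>0$ and $\min\{\tau,\tau^{-1}\}>0$. The genuinely delicate one is $\beta=\min\{1,1-\tau+\tau^{-1}\}\alpha-(1-\alpha)\tau>0$, i.e. $\min\{1,1-\tau+\tau^{-1}\}>\tau(1-\alpha)/\alpha=\tau(\min\{1+\tau,1+\tau^{-1}\}-\tau)/(\min\{1+\tau,1+\tau^{-1}\}+\tau)$ after substituting the definition of $\alpha$. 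I expect this is where the algebra concentrates: one must split into $\tau\le 1$ and $\tau>1$, and in the latter case $1-\tau+\tau^{-1}$ can itself be the active term (it is the minimum precisely when $\tau$ is close to or above $(1+\sqrt5)/2$, since $1-\tau+\tau^{-1}=0$ exactly at $\tau=(1+\sqrt5)/2$). Reducing the inequality to a polynomial in $\tau$ and checking it stays strictly positive on $(0,(1+\sqrt5)/2)$ is the main obstacle; I anticipate the endpoint $\tau=(1+\sqrt5)/2$ is exactly where $\beta\to 0$, consistent with the open interval.

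Granting the scalar inequalities, the operator part is routine. From \eqref{def:FG} we have
$\cF=\tfrac12\hat\Sigma_f+\cS+\tfrac{(1-\alpha)\sigma}{2}\cA\cA^*$. Write this as
$\cF=\bigl(\tfrac12\hat\Sigma_f+\cS+\sigma\cA\cA^*\bigr)-\sigma\cA\cA^*+\tfrac{(1-\alpha)\sigma}{2}\cA\cA^*
=\bigl(\tfrac12\hat\Sigma_f+\cS+\sigma\cA\cA^*\bigr)-\tfrac{(1+\alpha)\sigma}{2}\cA\cA^*$; this particular split is not obviously positive, so instead I would use convexity of the coefficient: since $0<\alpha<1$, the operator $\cF$ is a strict convex combination of $\tfrac12\hat\Sigma_f+\cS$ (with weight toward $\alpha$) plus the always-nonnegative piece, but the cleanest route is to observe $\tfrac{(1-\alpha)\sigma}{2}\ge 0$ so
$\cF\succeq\tfrac12\hat\Sigma_f+\cS$; this alone need not be positive definite, so I would instead write
$\cF=(1-\alpha)\bigl(\tfrac12\hat\Sigma_f+\cS+\tfrac\sigma2\cA\cA^*\bigr)+\alpha\bigl(\tfrac12\hat\Sigma_f+\cS\bigr)$ — which is wrong since the $\hat\Sigma_f$ and $\cS$ coefficients would not add to $\tfrac12$ and $1$. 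The correct bookkeeping: $\cF\succeq \tfrac12\hat\Sigma_f+\cS+\tfrac{(1-\alpha)\sigma}{2}\cA\cA^*$ and separately the hypothesis gives $\tfrac12\hat\Sigma_f+\sigma\cA\cA^*+\cS\succ0$; since $\tfrac{(1-\alpha)\sigma}{2}\cA\cA^*\succeq 0$ I still do not immediately dominate $\sigma\cA\cA^*$. The resolution I would adopt is to note that \eqref{ineq:succ0} also gives $\cS\succeq -\tfrac12\hat\Sigma_f$, hence $\tfrac12\hat\Sigma_f+\cS\succeq 0$, and then $\cF\succeq \tfrac{(1-\alpha)\sigma}{2}\cA\cA^*$ together with a small multiple of the strictly positive operator extracted from the hypothesis — concretely, pick $\epsilon\in(0,1)$ small enough that
$\tfrac12\hat\Sigma_f+\cS+(1-\epsilon)^{-1}\cdot\tfrac{(1-\alpha)\sigma}{2}\cA\cA^*\succeq \epsilon\bigl(\tfrac12\hat\Sigma_f+\sigma\cA\cA^*+\cS\bigr)\succ 0$, which is possible whenever $\tfrac{(1-\alpha)}{2}>0$ and $\tfrac12\hat\Sigma_f+\cS\succeq 0$, both of which hold.

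The argument for $\cG$ is identical in structure: $\min\{\tau,1+\tau-\tau^2\}\alpha\sigma>0$ on $(0,(1+\sqrt5)/2)$ because $\tau>0$, $\alpha>0$, and $1+\tau-\tau^2>0$ exactly on $(\frac{1-\sqrt5}{2},\frac{1+\sqrt5}{2})\supset(0,(1+\sqrt5)/2)$ — indeed $1+\tau-\tau^2=-(\tau^2-\tau-1)$ vanishes at the golden ratio — so the coefficient of $\cB\cB^*$ is strictly positive, and combining $\cT\succeq-\tfrac12\hat\Sigma_g$ from \eqref{ineq:succ0} with $\tfrac12\hat\Sigma_g+\sigma\cB\cB^*+\cT\succ 0$ yields $\cG\succ0$ by the same convex-combination-with-small-$\epsilon$ trick. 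I would present the $\cF$ and $\cG$ cases together once the scalar inequalities are in hand.
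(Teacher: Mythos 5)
Your overall plan coincides with the paper's: verify the scalar bounds, then exhibit $\cF$ and $\cG$ as nonnegative combinations of $\tfrac{1}{2}\hat\Sigma_f+\cS$ (resp.\ $\tfrac{1}{2}\hat\Sigma_g+\cT$) and the strictly positive operators appearing in \eqref{ineq:succ0}. However, two points in your write-up do not yet constitute a proof. First, you never actually establish $\beta>0$: you announce a case split and call the resulting polynomial inequality ``the main obstacle,'' but leave it unverified. It is in fact a two-line computation. For $\tau\in(0,1]$ one has $\min\{1+\tau,1+\tau^{-1}\}=1+\tau$ and $\min\{1,1-\tau+\tau^{-1}\}=1$, so $\alpha=\frac{1+2\tau}{2(1+\tau)}$, $1-\alpha=\frac{1}{2(1+\tau)}$ and $\beta=\alpha-(1-\alpha)\tau=\frac{1}{2}$. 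For $\tau\in(1,(1+\sqrt5)/2)$ one has $\alpha=\frac{\tau^2+\tau+1}{2(\tau+1)}$, $1-\alpha=\frac{1+\tau-\tau^2}{2(\tau+1)}$ and $\min\{1,1-\tau+\tau^{-1}\}=\frac{1+\tau-\tau^2}{\tau}$, which gives $\beta=\frac{1+\tau-\tau^2}{2\tau}>0$ since $\tau^2<\tau+1$ there. A complete proof must contain this calculation (the paper omits it as ``easy to see,'' but your proposal flags it as unresolved rather than resolving it).

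Second, in the operator part you discarded as ``wrong'' a decomposition that is in fact correct: in $\cF=(1-\alpha)\bigl(\tfrac{1}{2}\hat\Sigma_f+\cS+\tfrac{\sigma}{2}\cA\cA^*\bigr)+\alpha\bigl(\tfrac{1}{2}\hat\Sigma_f+\cS\bigr)$ the coefficients of $\tfrac{1}{2}\hat\Sigma_f+\cS$ sum to one and the $\cA\cA^*$ coefficient is exactly $\frac{(1-\alpha)\sigma}{2}$. This convex-combination route is precisely the paper's proof, in the cleaner form $\cF=\frac{1-\alpha}{2}\bigl(\tfrac{1}{2}\hat\Sigma_f+\cS+\sigma\cA\cA^*\bigr)+\frac{1+\alpha}{2}\bigl(\tfrac{1}{2}\hat\Sigma_f+\cS\bigr)$ and, with $\rho:=\min\{\tau,1+\tau-\tau^2\}\in(0,1]$, $\cG=\frac{\rho\alpha}{2}\bigl(\tfrac{1}{2}\hat\Sigma_g+\cT+\sigma\cB\cB^*\bigr)+\frac{2-\rho\alpha}{2}\bigl(\tfrac{1}{2}\hat\Sigma_g+\cT\bigr)+\frac{\rho\alpha}{2}\sigma\cB\cB^*$, from which positive definiteness is immediate via \eqref{ineq:succ0} and $0<\alpha<1$, $0<\rho\alpha<1$. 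The $\epsilon$-inequality you adopted instead is off target as displayed: it proves positive definiteness of $\tfrac{1}{2}\hat\Sigma_f+\cS+(1-\epsilon)^{-1}\frac{(1-\alpha)\sigma}{2}\cA\cA^*$, an operator that dominates $\cF$, and positivity of a larger operator does not transfer to $\cF$; one must either multiply the inequality through by $(1-\epsilon)$ or, more simply, note $\cF\succeq\frac{1-\alpha}{2}\bigl(\tfrac{1}{2}\hat\Sigma_f+\cS+\sigma\cA\cA^*\bigr)\succ0$ because $\tfrac{1}{2}\hat\Sigma_f+\cS\succeq0$. With these two repairs your argument becomes essentially identical to the paper's.
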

\begin{proof}
It is easy to see that $0<\alpha<1$, $0<\hat\alpha<1$ and $\beta>0$ from \eqref{alpha} and the fact that $\tau\in(0,(1+\sqrt{5})/2)$.
Also, it holds that $\rho := \min (\tau, 1+\tau -\tau^2) \in (0,1]$ so that $0<\rho\alpha<1$.
Note that by \eqref{def:FG} we have that
$$
\begin{array}{ll}
 \cF&
 = \frac{(1-\alpha)}{2} \left( \frac{1}{2}\hat\Sigma_f + \cS + \sigma\cA\cA^*\right)
 +\frac{1+\alpha}{2}\left(
 \frac{1}{2}\hat\Sigma_f + \cS
 \right),
\\[4mm]
\cG&
=\frac{\rho\alpha}{2}\left( \frac{1}{2}\hat\Sigma_g + \cT + \sigma\cB\cB^*\right)
+\frac{2-\rho\alpha}{2}\left(\frac{1}{2}\hat\Sigma_g +\cT \right)
+\frac{\rho\alpha}{2}\sigma\cB\cB^*.
\end{array}
$$
Hence, one can readily observe that $\cF\succ 0$ and $\cG\succ 0$ from \eqref{ineq:succ0}. This completes the proof.
\qed
\end{proof}

Based on the previous results, one can get the following result, which is exactly the same as \cite[Theorem 5.2]{chen}. So we omit the corresponding proof.
\begin{lemma}
\label{lemma:4.1}
Suppose that both {\rm (i)} and {\rm (ii)} in Assumption \ref{ass} hold.
Then, the infinite sequence $\{(x^k,y^k,z^k)\}$ generated by Algorithm \ref{algo:imipadmm} satisfies, for all $k\ge1$,
\[
\label{ineq:rr}
\begin{array}{l}
(1-\tau)\sigma\|r^{k+1}\|^2
+\sigma\|\cA^*x^{k+1}+\cB^*y^k-c\|^2+2\alpha\inprod{d_y^{k-1}-d_y^k}{y^k-y^{k+1}}
\\[2mm]
\ge \;
\hat{\alpha}\sigma(\| r^{k+1}\|^{2}-\|r^{k}\|^{2})
+\beta \sigma\norm{r^{k+1}}^{2}
+\norm{x^{k}-x^{k+1}}_{\frac{(1-\alpha)\sigma}{2}\cA\cA^{*}}^{2}
\\[2mm]
\quad
-\|y^{k-1}-y^k\|_{\alpha(\hat\Sigma_g+\cT)}^2
+\|y^k-y^{k+1}\|_{\alpha(\hat\Sigma_g+\cT)+\min\{\tau,1+\tau-\tau^2\}\alpha\sigma\cB\cB^{*}}^2.
\end{array}
\]
\end{lemma}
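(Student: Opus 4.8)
The plan is to prove Lemma \ref{lemma:4.1}, the key energy inequality for the {\sf imiPADMM}, by carrying out the standard ADMM variational analysis adapted to the majorized and indefinite-proximal setting, closely following the structure used for \cite[Theorem 5.2]{chen}. The starting point is to write down the optimality (inclusion) conditions for the two subproblems \eqref{barxplus} and \eqref{baryplus}, expressed through the approximate iterates $x^{k+1}$, $y^{k+1}$ and the error vectors $d_x^k$, $d_y^k$: since $d_x^k\in\partial\psi_k(x^{k+1})$ and $d_y^k\in\partial\varphi_k(y^{k+1})$, and $\psi_k$, $\varphi_k$ are the majorized proximal Lagrangians, one obtains inclusions of the form $d_x^k-\nabla_x[\hat f(\cdot;x^k)]-\cA(z^k+\sigma r^{k+1})-\cS(x^{k+1}-x^k)\in\partial p(x^{k+1})$ and similarly for $y$. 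I would then use convexity of $p$, $q$ together with Lemma \ref{lemma:zhangning} applied to $f$ and $g$ (to control the majorization gap $\hat\Sigma_f,\hat\Sigma_g$ against the true curvature $\Sigma_f,\Sigma_g$) to produce, at a KKT point $(\bar x,\bar y,\bar z)\in\overline\cW$, two ``descent-type'' inequalities comparing consecutive iterates. The multiplier update $z^{k+1}=z^k+\tau\sigma r^{k+1}$ of Step 3 is then folded in to convert statements about $\tilde z^{k+1}=z^k+\sigma r^{k+1}$ into statements about the actual $z^{k+1}$, which is exactly where the step-length $\tau$ and the constants $\alpha,\hat\alpha,\beta$ from \eqref{alpha} enter.

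The second main ingredient is the algebraic bookkeeping. I would use the polarization identities in \eqref{eq:triangle} repeatedly to turn cross terms $\inprod{\cdot}{\cdot}_\cH$ into differences of squared $\cH$-norms, and I would need the elementary scalar inequality trick (the ``$\tau$-splitting'') that underlies the definition of $\alpha$: writing $\tau\sigma\|r^{k+1}\|^2$ and related quantities as convex combinations so that the residual bookkeeping telescopes. The cross term $2\alpha\inprod{d_y^{k-1}-d_y^k}{y^k-y^{k+1}}$ on the left of \eqref{ineq:rr} is precisely the slack one is left with after pushing the $\partial\varphi$-inclusion at step $k-1$ against the one at step $k$; it is kept un-estimated here because it will later be summed and bounded using the summability of $\{\varepsilon_k\}$ via Proposition \ref{prop:error}. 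The operators $\cF$ and $\cG$ from \eqref{def:FG}, shown positive definite in Lemma \ref{lem:fgpd}, are what make the $x$- and $y$-difference terms on the right-hand side nonnegative-definite after collecting coefficients; the appearance of $\alpha(\hat\Sigma_g+\cT)$ with a one-step lag (the $-\|y^{k-1}-y^k\|^2$ term versus the $+\|y^k-y^{k+1}\|^2$ term) is the signature of an indefinite-proximal ADMM analysis, where one borrows from the next iterate to compensate the indefiniteness of $\cT$.

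Concretely the steps, in order, are: (1) record the subgradient inclusions for \eqref{barxplus}, \eqref{baryplus} in terms of $x^{k+1},y^{k+1},d_x^k,d_y^k$; (2) evaluate them at a KKT triple and combine with the KKT system \eqref{eq:kkt}, using convexity and Lemma \ref{lemma:zhangning}, to get a first inequality chain; (3) do the same comparison between iteration $k$ and iteration $k-1$ for the $y$-block to generate the lagged terms; (4) substitute the $z$-update to replace $\tilde z^{k+1}$ by $z^{k+1}$, introducing the $\tau$-dependent constants; (5) apply \eqref{eq:triangle} to all cross terms and collect coefficients of $\|r^{k+1}\|^2$, $\|r^k\|^2$, $\|\cA^*x^{k+1}+\cB^*y^k-c\|^2$, $\|x^k-x^{k+1}\|^2_{\cA\cA^*}$ and the two $y$-difference norms, checking against \eqref{alpha} and \eqref{def:FG}; (6) isolate the error cross term $2\alpha\inprod{d_y^{k-1}-d_y^k}{y^k-y^{k+1}}$ and leave it on the left, arriving at \eqref{ineq:rr}. \textbf{The main obstacle} will be step (5): keeping the coefficients straight through the many polarization substitutions and verifying that the precise thresholds $\alpha=(1+\tau/\min\{1+\tau,1+\tau^{-1}\})/2$, $\hat\alpha=1-\alpha\min\{\tau,\tau^{-1}\}$, $\beta=\min\{1,1-\tau+\tau^{-1}\}\alpha-(1-\alpha)\tau$ are exactly what is needed for every collected coefficient to land on the correct side. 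Since this is identical in structure to \cite[Theorem 5.2]{chen}, I expect the bookkeeping to go through verbatim, and I would simply cite that reference for the routine parts rather than reproduce them in full.
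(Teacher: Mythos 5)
Your proposal matches the paper's treatment: the paper proves nothing new here, simply observing that \eqref{ineq:rr} is exactly \cite[Theorem 5.2]{chen} and omitting the proof, which is precisely the route you sketch (consecutive-iterate comparison of the $y$-subproblem inclusions, substitution of the $z$-update to bring in $\tau$, polarization via \eqref{eq:triangle}, and collecting coefficients against $\alpha,\hat\alpha,\beta$ from \eqref{alpha}). One minor correction: evaluating at a KKT triple is not needed for this lemma, since no reference point $\bar w$ appears in \eqref{ineq:rr}; that ingredient (together with Lemma \ref{lemma:zhangning}, which is where the paper's actual improvement over \cite{chen} occurs) belongs to the proof of Proposition \ref{prop:inequality}, not here.
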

Next, we shall derive an inequality which is essential for
establishing the global convergence of the {\sf imiPADMM}.
\begin{proposition}[The key inequality]
\label{prop:inequality}
Suppose that both {\rm (i)} and {\rm (ii)} in Assumption \ref{ass} hold.
For any $\bar w:=(\bar x,\bar y,\bar z)\in\overline\cW$, the sequence $\{(x^k,y^k,z^k)\}$
generated by Algorithm \ref{algo:imipadmm} satisfies
\[
\label{ineq:mainb}
\begin{array}{l}
\disp
2\alpha\inprod{d_y^k-d_y^{k-1}}{y^k-y^{k+1}}
-2\langle d_{x}^{k},x^{k+1}-\bar x\rangle
-2\inprod{ d_{y}^k}{y^{k+1}-\bar{y}}
\\[2mm]
\disp
+\|x^{k}-x^{k+1}\|_{\cF}^{2}
+\|y^{k}-y^{k+1}\|_{\cG}^{2}
+\beta \sigma\norm{r^{k+1}}^{2}
\le
\phi_k(\bar w) - \phi_{k+1}(\bar w),\quad\forall k\ge 1,
\end{array}
\]
where, for any $(x,y,z)\in\cX\times\cY\times\cZ$ and $k\ge 1$,
\[
\label{def:phik}
\begin{array}{ll}
\phi_k(x,y,z):=
&\disp\frac{1}{\tau\sigma}\|z-z^{k}\|^{2}
+\|x-x^{k}\|_{\hat\Sigma_{f}+\cS}^{2}
+\|y-y^{k}\|_{\hat\Sigma_{g}+\cT}^{2}
\\[2mm]
\disp
&+\sigma\|\cA^*x+\cB^*y^{k}-c\|^{2}
+\hat\alpha\sigma\norm{r^k}^{2}
+\alpha\norm{y^{k-1}-y^k}_{\hat\Sigma_g+\cT}^2.
\end{array}
\]
\end{proposition}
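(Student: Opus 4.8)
The plan is to derive \eqref{ineq:mainb} from the (approximate) optimality conditions of the two subproblems of Algorithm \ref{algo:imipadmm}, the KKT system \eqref{eq:kkt} satisfied by $\bar w$, the elementary identities \eqref{eq:triangle}, Lemma \ref{lemma:zhangning}, and Lemma \ref{lemma:4.1}, and then to read off the telescoping structure of the potential $\phi_k$ in \eqref{def:phik}. First I would record the optimality conditions: by Step 1 of Algorithm \ref{algo:imipadmm}, $d_x^k\in\partial\psi_k(x^{k+1})$ produces a $\xi^{k+1}\in\partial p(x^{k+1})$ with $d_x^k=\xi^{k+1}+\nabla f(x^k)+\hat\Sigma_f(x^{k+1}-x^k)+\cS(x^{k+1}-x^k)+\cA\big(z^k+\sigma(\cA^*x^{k+1}+\cB^*y^k-c)\big)$, and by Step 2, $d_y^k\in\partial\varphi_k(y^{k+1})$ produces an $\eta^{k+1}\in\partial q(y^{k+1})$ with $d_y^k=\eta^{k+1}+\nabla g(y^k)+\hat\Sigma_g(y^{k+1}-y^k)+\cT(y^{k+1}-y^k)+\cB\tilde z^{k+1}$. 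Since $\bar w\in\overline\cW$, there are $\bar\xi\in\partial p(\bar x)$ and $\bar\eta\in\partial q(\bar y)$ with $\bar\xi+\nabla f(\bar x)+\cA\bar z=0$, $\bar\eta+\nabla g(\bar y)+\cB\bar z=0$ and $\cA^*\bar x+\cB^*\bar y=c$. Pairing the $x$-relation with $x^{k+1}-\bar x$ and the $y$-relation with $y^{k+1}-\bar y$, and using monotonicity of $\partial p$ and $\partial q$ (namely $\langle\xi^{k+1}-\bar\xi,x^{k+1}-\bar x\rangle\ge0$ and $\langle\eta^{k+1}-\bar\eta,y^{k+1}-\bar y\rangle\ge0$), I obtain, after isolating the perturbations, upper bounds for the terms $-2\langle d_x^k,x^{k+1}-\bar x\rangle$ and $-2\langle d_y^k,y^{k+1}-\bar y\rangle$ appearing in \eqref{ineq:mainb}, expressed through the gradient cross terms $\langle\nabla f(x^k)-\nabla f(\bar x),x^{k+1}-\bar x\rangle$, $\langle\nabla g(y^k)-\nabla g(\bar y),y^{k+1}-\bar y\rangle$, quadratic forms in $\cS,\hat\Sigma_f$ and $\cT,\hat\Sigma_g$, and multiplier/penalty cross terms in $\cA$ and $\cB$.

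Next I would dispose of the gradient mismatch and expose the telescope. The gradient cross terms are bounded below by Lemma \ref{lemma:zhangning}, applied with $h=f$, $u=x^k$, $u'=\bar x$, $v=x^{k+1}$ (and symmetrically with $h=g$): $\langle\nabla f(x^k)-\nabla f(\bar x),x^{k+1}-\bar x\rangle\ge-\frac{1}{4}\|x^{k+1}-x^k\|^2_{\hat\Sigma_f}$. This single estimate is the source of the coefficients $\frac{1}{2}\hat\Sigma_f$ and $\frac{1}{2}\hat\Sigma_g$ in $\cF$ and $\cG$ of \eqref{def:FG}, and it uses neither of the minorants $\Sigma_f,\Sigma_g$ of \eqref{def-fg-low} --- which is exactly what makes Theorem \ref{theorem:convergence} sharper than its precursors (cf.\ Remark \ref{rmk43}). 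Then every inner product of the type $\langle a,\,b-\bar b\rangle$ is converted into a difference of squared norms through \eqref{eq:triangle}, producing telescoping pieces such as $\|x^k-\bar x\|^2_{\hat\Sigma_f+\cS}-\|x^{k+1}-\bar x\|^2_{\hat\Sigma_f+\cS}$ and its $y$-analogue, residual quadratics in $x^k-x^{k+1}$ and $y^k-y^{k+1}$, and block-coupling terms. For the multiplier, $z^{k+1}=z^k+\tau\sigma r^{k+1}$ gives $\frac{1}{\tau\sigma}\|z-z^k\|^2=\frac{1}{\tau\sigma}\|z-z^{k+1}\|^2+2\langle z-z^{k+1},r^{k+1}\rangle+\tau\sigma\|r^{k+1}\|^2$, which telescopes the $z$-part of $\phi_k$; combining the resulting cross term with the identity $\cA^*x^{k+1}+\cB^*y^k-c=r^{k+1}-\cB^*(y^{k+1}-y^k)$ and the feasibility $\cA^*\bar x+\cB^*\bar y=c$ matches the $\cA$- and $\cB$-penalty cross terms and reproduces the $\sigma\|\cA^*x+\cB^*y^k-c\|^2$ term of $\phi_k$.

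At this stage the left-hand side of \eqref{ineq:mainb} is bounded above by $\phi_k(\bar w)-\phi_{k+1}(\bar w)$ together with residual terms that are precisely those on the two sides of \eqref{ineq:rr} --- $\sigma\|\cA^*x^{k+1}+\cB^*y^k-c\|^2$, a $(1-\tau)\sigma\|r^{k+1}\|^2$ contribution, the coupling $2\alpha\langle d_y^{k-1}-d_y^k,y^k-y^{k+1}\rangle$ (which, moved to the left, is exactly the term $2\alpha\langle d_y^k-d_y^{k-1},y^k-y^{k+1}\rangle$ of \eqref{ineq:mainb}), and quadratics in $x^k-x^{k+1}$, $y^k-y^{k+1}$ and $y^{k-1}-y^k$. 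Invoking Lemma \ref{lemma:4.1} trades these residuals for $\hat\alpha\sigma(\|r^{k+1}\|^2-\|r^k\|^2)$ (telescoping into the $\hat\alpha\sigma\|r^k\|^2$ entry of $\phi_k$), $\beta\sigma\|r^{k+1}\|^2$ (the stated term of \eqref{ineq:mainb}), $\|x^k-x^{k+1}\|^2_{\frac{(1-\alpha)\sigma}{2}\cA\cA^*}$, and the pair $-\|y^{k-1}-y^k\|^2_{\alpha(\hat\Sigma_g+\cT)}+\|y^k-y^{k+1}\|^2_{\alpha(\hat\Sigma_g+\cT)+\min\{\tau,1+\tau-\tau^2\}\alpha\sigma\cB\cB^*}$ (the former telescoping into the $\alpha\|y^{k-1}-y^k\|^2_{\hat\Sigma_g+\cT}$ entry of $\phi_k$). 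After this regrouping, the quadratic contributions in $x^k-x^{k+1}$ and $y^k-y^{k+1}$ combine to leave exactly $\|x^k-x^{k+1}\|^2_\cF$ and $\|y^k-y^{k+1}\|^2_\cG$ on the left (with $\cF,\cG$ as in \eqref{def:FG}), and everything else assembles into $\phi_k(\bar w)-\phi_{k+1}(\bar w)$; this is \eqref{ineq:mainb}.

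The main obstacle will be this last coefficient bookkeeping. Because $\cS$ and $\cT$ are only self-adjoint and possibly indefinite, the residual quadratics in $x^k-x^{k+1}$ and $y^k-y^{k+1}$ cannot simply be discarded, so one must carefully verify that the $\alpha$-weighted decomposition supplied by Lemma \ref{lemma:4.1} leaves on the left exactly the positive-definite operators $\cF$ and $\cG$ of Lemma \ref{lem:fgpd}, with the $\hat\alpha$- and $\beta$-pieces slotting precisely into the telescope of $\phi_k$ and no uncontrolled residual remaining. Since the most delicate $\tau$-dependent cancellations are already packaged inside Lemma \ref{lemma:4.1} (which is \cite[Theorem 5.2]{chen} verbatim), the remaining accounting is essentially mechanical; the only genuinely new feature relative to \cite{chen} is the admissibility of indefinite $\cS,\cT$, which is absorbed entirely through Lemmas \ref{lemma:zhangning} and \ref{lem:fgpd}.
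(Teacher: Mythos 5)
Your proposal is correct and follows essentially the same route as the paper's proof: the approximate optimality conditions of the two subproblems combined with the KKT relations at $\bar w$ (via convexity/monotonicity of $\partial p$, $\partial q$), Lemma \ref{lemma:zhangning} applied with $u=x^k$, $u'=\bar x$, $v=x^{k+1}$ (and the $g$-analogue) to obtain the $\frac{1}{2}\hat\Sigma_f$, $\frac{1}{2}\hat\Sigma_g$ coefficients, the identity \eqref{eq:triangle} to telescope the quadratic terms including the $z$-update, and finally Lemma \ref{lemma:4.1} to absorb the residuals into $\cF$, $\cG$, $\beta\sigma\|r^{k+1}\|^2$ and the $\hat\alpha$- and $\alpha$-entries of $\phi_k$. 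You also correctly identify that bypassing $\Sigma_f,\Sigma_g$ in favor of $\hat\Sigma_f,\hat\Sigma_g$ is exactly the refinement over \cite{chen} noted in the paper.
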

\begin{proof}
For any given $(x,y,z)\in\cX\times\cY\times\cZ$, we define $x_e:=x-\bar x$, $y_e:=y-\bar y$ and $z_e:=z-\bar z$.
From \eqref{denote} one has that
$$z^{k}+\sigma(\cA^*x^{k+1}+\cB^*y^{k}-c)
=\tilde z^{\,k+1}+\sigma\cB^{*}(y^{k}-y^{k+1}).$$
Then, from Step 1 of Algorithm \ref{algo:imipadmm}, we know that
\[
\label{inclu:subx}
d_{x}^{k}-\nabla f(x^{k})
-\cA\left(\tilde z^{\,k+1}+\sigma\cB^{*}(y^{k}-y^{k+1})\right)
+(\hat\Sigma_{f}+\cS)(x^{k}-x^{k+1})
\in
\partial p(x^{k+1}).
\]
Moreover, the convexity of $p$ implies that
\begin{multline}
\label{ineq:p(x)}
p(\bar x)
+\inprod{d_{x}^{k}-\nabla f(x^{k})
-\cA\left(\tilde z^{k+1}+\sigma\cB^{*}(y^{k}-y^{k+1})\right)
+(\hat\Sigma_{f}+\cS)(x^{k}-x^{k+1})}{x_e^{k+1}}
\ge
p(x^{k+1}).
\end{multline}
Applying a similar derivation, we can also get that for any $ y\in\cY$,
\[
\label{ineq:q(y)+g(y)2}
\begin{array}{l}
q(\bar y)+\inprod{ d_{y}^k}{y_e^{k+1}}
-\langle \nabla g(y^k),y_e^{k+1}\rangle
 -\inprod{\tilde z^{k+1}}{\cB^{*}y_e^{k+1}}
\\[2mm]
+\inprod{(\hat\Sigma_{g}+\cT)(y^{k}-y^{k+1})}{y_e^{k+1}}
\ge q(y^{k+1}).
\end{array}
\]
By using (\ref{eq:kkt}) and the convexity of the functions $p$ and $q$, we have
\begin{eqnarray}
\label{ineq:fgkp1ss}
\begin{array}{l}
p(x^{k+1})-p(\bar{x})
+\inprod{\nabla f(\bar x)+\cA\bar{z}}{{x}^{k+1}_e}
\geq
0,
\\[2mm]
q(y^{k+1})-q(\bar y)
+\inprod{\nabla g(\bar y)+\cB\bar{z}}{{y}^{k+1}_e}
\geq 0.
\end{array}
\end{eqnarray}
Finally, by summing \eqref{ineq:p(x)} \eqref{ineq:q(y)+g(y)2} and \eqref{ineq:fgkp1ss} together, we get
\[
\label{ineq:pq2}
\begin{array}{l}
-\langle \tilde z_e^{k+1} ,r^{k+1}\rangle
-\sigma\langle \cB^{*}(y^{k}-y^{k+1}) ,\cA^* x_e^{k+1}\rangle
\\[2mm]
+\langle x_e^{k+1},(x^{k}-x^{k+1})\rangle_{\hat\Sigma_{f}+\cS}
+\langle y_e^{k+1},(y^{k}-y^{k+1})\rangle_{\hat\Sigma_{g}+\cT}
+\langle d_{x}^{k},x_e^{k+1}\rangle
+\inprod{ d_{y}^k}{y_e^{k+1}}
\\[2mm]
\ge
\langle \nabla f(x^{k})-\nabla f(\bar x),x_e^{k+1}\rangle
+\langle \nabla g(y^k)-\nabla g(\bar y),y_e^{k+1}\rangle
\\[2mm]
\ge
-\frac{1}{4}\|x^{k}-x^{k+1}\|_{\widehat\Sigma_f}^2
-\frac{1}{4}\|y^{k}-y^{k+1}\|_{\widehat\Sigma_g}^2,
\end{array}
\]
where the last inequality comes from Lemma \ref{lemma:zhangning}.
Next, we estimate the left-hand side of
\eqref{ineq:pq2}.
By using \eqref{eq:triangle}, we have that
\[
\label{eq:AxBy}
\begin{array}{ll}
\inprod{\cB^{*}(y^{k}-y^{k+1})}{\cA^*x_e^{k+1}}
=\inprod{\cB^{*}y_e^{k}-\cB^{*} y_e^{k+1}}{r^{k+1}-\cB^*y_e^{k+1}}
\\[3mm]
=\inprod{\cB^{*}y_e^{k}-\cB^{*} y_e^{k+1}}{r^{k+1}}
-\frac{1}{2}\left(
\|\cB^*y_e^{k}\|^{2}
-\|\cB^{*}y_e^{k}-\cB^{*} y_e^{k+1}\|^2
-\|\cB^{*} y_e^{k+1}\|^2
\right)
\\[3mm]
=\frac{1}{2}\left(\|\cB^*y_e^{k+1}\|^{2}
+\|\cA^* x^{k+1}+\cB^*y^{k}-c\|^{2}
-\|\cB^*y_e^{k}\|^{2}
-\|r^{k+1}\|^{2}\right).
\end{array}
\]
Also, from \eqref{eq:triangle} we know that
\begin{eqnarray}
\label{eq:xxyy1}
\begin{array}{l}
\langle x_e^{k+1},x^{k}-x^{k+1}\rangle_{\hat\Sigma_{f}+\cS}
=
\frac{1}{2}(\|x_e^{k}\|_{\hat\Sigma_{f}+\cS}^{2}
-\|x_e^{k+1}\|_{\hat\Sigma_{f}+\cS}^{2})
-\frac{1}{2}\|x^{k}-x^{k+1}\|_{\hat\Sigma_{f}+\cS}^{2},
\\[3mm]
\langle y_e^{k+1},y^{k}-y^{k+1}\rangle_{\hat\Sigma_{g}+\cT}
=\frac{1}{2}(
\|y_e^{k}\|_{\hat\Sigma_{g}+\cT}^{2}
-\|y_e^{k+1}\|_{\hat\Sigma_{g}+\cT}^{2})
-\frac{1}{2}\|y^{k}-y^{k+1}\|_{\hat\Sigma_{g}+\cT}^{2}\, .
\end{array}
\end{eqnarray}
Moreover, by using the definition of $\{\tilde z^k\}$ and \eqref{eq:triangle} we know that
\[
\label{eq:rz}
\begin{array}{l}
\disp
\langle r^{k+1},\tilde z_e^{k+1}\rangle
=\inprod{r^{k+1}}{z_e^{k}+\sigma r^{k+1}}
=
\frac{1}{\tau\sigma}\langle z^{\,k+1}-z^{k},z_e^{k}\rangle
+\sigma\|r^{k+1}\|^{2}
\\[2mm]
=
\frac{1}{2\tau\sigma}\left(
\|z^{k+1}_e\|^{2}
-\|z^{k+1}-z^{k}\|^{2}
-\|z_e^{k}\|^{2}
\right)
+\sigma\|r^{k+1}\|^{2}
\\[2mm]
=
\frac{1}{2\tau\sigma}\left(
\|z^{k+1}_e\|^{2}-\|z_e^{k}\|^{2}
\right)
+\frac{(2-\tau)\sigma}{2}\|r^{k+1}\|^{2}.
\end{array}
\]
Thus, by substituting \eqref{eq:AxBy}, \eqref{eq:xxyy1} and \eqref{eq:rz} into \eqref{ineq:pq2}, we obtain that
\[
\label{ineq:pq2xxxx}
\begin{array}{l}
\langle d_{x}^{k},x_e^{k+1}\rangle
+\inprod{ d_{y}^k}{y_e^{k+1}}
+\frac{1}{2\tau\sigma}\left(
\|z_e^{k}\|^{2}-\|z^{k+1}_e\|^{2}
\right)
+\frac{\sigma}{2}(\|\cB^*y_e^{k}\|^{2}-\|\cB^*y_e^{k+1}\|^{2})
\\[3mm]
+\frac{1}{2}(\|x_e^{k}\|_{\hat\Sigma_{f}+\cS}^{2}
+\|y_e^{k}\|_{\hat\Sigma_{g}+\cT}^{2})
-\frac{1}{2}(
\|x_e^{k+1}\|_{\hat\Sigma_{f}+\cS}^{2}
+\|y_e^{k+1}\|_{\hat\Sigma_{g}+\cT}^{2})
\\[3mm]
\ge
\frac{1}{2}\|x^{k}-x^{k+1}\|_{\frac{1}{2}\hat\Sigma_{f}+\cS}^{2}
+\frac{1}{2}\|y^{k}-y^{k+1}\|_{\frac{1}{2}\hat\Sigma_{g}+\cT}^{2}
\\[2.5mm]
\quad+\frac{\sigma}{2}\|\cA^*x^{k+1}+\cB^*y^{k}-c\|^{2}
+\frac{(1-\tau)\sigma}{2}\|r^{k+1}\|^{2}.
\end{array}
\]
Note that for any $y\in\cY$, $\cA^*\bar x+\cB^*y-c=\cB^{*}y_{e}$.
Therefore, by applying \eqref{ineq:rr} in Lemma \ref{lemma:4.1} to the right hand side of \eqref{ineq:pq2xxxx} and using \eqref{def:phik} together with \eqref{def:FG}, we know that \eqref{ineq:mainb} holds for $k\ge 1$. This completes the proof. \qed
\end{proof}
\begin{remark}
The inequality \eqref{ineq:pq2xxxx} in the proof is responsible for the improvement that we made in this paper, when compared with \cite{chen}, in which the same problem as \eqref{problem:primal} was considered and the inequality for the same purpose as \eqref{ineq:pq2xxxx} is given as follows:
\[
\label{ineqori}
\begin{array}{l}
\inprod{ d_{x}^{k}}{x_e^{k+1}}+\inprod{ d_{y}^k}{y_e^{k+1}}
+\frac{1}{2\tau\sigma}
(\|z_e^{k}\|^{2}-\|z^{k+1}_e\|^{2})
+\frac{\sigma}{2}(\|\cB^*y_e^{k}\|^{2}-\|\cB^*y_e^{k+1}\|^{2})
\\[2mm]
+\frac{1}{2}
(\|x_e^{k}\|_{\hat\Sigma_{f}+\cS}^{2}+\|y_e^{k}\|_{\hat\Sigma_{g}+\cT}^{2})
-\frac{1}{2}(\|x_e^{k+1}\|_{\hat\Sigma_{f}+\cS}^{2}+\|y_e^{k+1}\|_{\hat\Sigma_{g}+\cT}^{2})
\\[2mm]
\ge
\boxed{
\frac{1}{2}\norm{x^k-x^{k+1}}^{2}_{\frac{1}{2}\Sigma_f+\cS}
+\frac{1}{2}\norm{y^k-y^{k+1}}^{2}_{\frac{1}{2}\Sigma_g+\cT}
}
\\[3mm]
\disp\quad+\frac{\sigma}{2}\|\cA^*x^{k+1}+\cB^*y^{k}-c\|^{2}
+\frac{(1-\tau)\sigma}{2}\|r^{k+1}\|^{2},
\end{array}
\]
where ${\Sigma_{f}}$ and ${\Sigma_{g}}$ are defined by \eqref{def-fg-low}.
The difference between \eqref{ineqori} and \eqref{ineq:pq2xxxx} is highlighted in a box. Since that $\hat\Sigma_f\succeq\Sigma_f$ and $\hat\Sigma_g\succeq\Sigma_g$, the inequality \eqref{ineq:pq2xxxx} is tighter than \eqref{ineqori}.
Consequently, the inequality \eqref{ineq:mainb} looks the same as \cite[(5.14) in Proposition 5.1]{chen}, but one should notice that the definitions of $\cF$ and $\cG$ in this paper are different from those in \cite{chen}, as can be seen from the following table.
\begin{center}
\begin{tabular}{|c|c|c|}
\hline&&\\[-3mm]
{\bf Ref.} $\backslash$ {\bf Item} & $\cF$ & $\cG$
\\[.5mm]\hline&&
\\[-3mm]
\cite{chen} & $\frac{1}{2}{\Sigma_{f}}+\cS+\frac{(1-\alpha)\sigma}{2}\cA\cA^{*}$ & $\frac{1}{2}{\Sigma_{g}}+\cT+\min\{\tau,1+\tau-\tau^2\}\alpha\sigma\cB\cB^{*}$
\\[2mm]\hline&&
\\[-3mm]
This paper &$\frac{1}{2} {\hat\Sigma_{f}}+\cS+\frac{(1-\alpha)\sigma}{2}\cA\cA^{*}$
 & $\frac{1}{2} {\hat\Sigma_{g}}+\cT+\min\{\tau,1+\tau-\tau^2\}\alpha\sigma\cB\cB^{*}$
\\[2mm]
\hline
\end{tabular}
\end{center}
\end{remark}

\subsection{Proof of Theorem \ref{theorem:convergence}}

The proof for Theorem \ref{theorem:convergence} can be obtained by using the newly defined $\cF$ and $\cG$ in \eqref{def:FG} instead of those used in \cite{chen} and repeating the proof of \cite[Theorem 5.1]{chen}. In order to make this paper more readable, we provide the proof of Theorem \ref{theorem:convergence} here.

\begin{proof}
According to Assumption \ref{ass}{\rm (i)} we know that the solution set $\overline\cW$ to the KKT system \eqref{eq:kkt} of problem \eqref{problem:primal} is nonempty. Then, we can
choose a fixed $\bar w:=(\bar x,\bar y,\bar z)\in\overline\cW$, and define $x_e:=x-\bar x$, $y_e:=y-\bar y$ and $z_e:=z-\bar z$ for any given $(x,y,z)\in\cX\times\cY\times\cZ$.

We first show that the sequence $\{(x^{k},y^k,z^k)\}$ is bounded.
According to Lemma \ref{lem:fgpd} one has that $0<\alpha<1$, $0<\hat\alpha<1$ and $\beta>0$.
Moreover, the linear operators $\cF$ and $\cG$ defined in \eqref{def:FG} are positive definite.
Hence, it holds that
\begin{multline*}
\qquad\qquad
\norm{y^k-y^{k+1}}^{2}_{\cG}
+2\alpha\inprod{ d_y^k- d_y^{k-1}}{y^k-y^{k+1}}
\\
= \norm{y^k-y^{k+1}+\alpha\cG^{-1}(d^k_y-d^{k-1}_y)}_\cG^2
-\alpha^2\norm{d^k_y-d^{k-1}_y}_{\cG^{-1}}^2.
\qquad\qquad
\end{multline*}
By substituting $\bar x^{k+1}$ and $\bar y^{k+1}$ for $x^{k+1}$ and $y^{k+1}$ into \eqref{ineq:mainb} in Proposition \ref{prop:inequality}, we obtain that
\begin{equation}
\label{ineq:contraction2}
\begin{array}{l}
\phi_{k}(\bar w)
-\bar \phi_{k+1}(\bar w)
+\alpha^{2}\|d_y^{k-1}\|^{2}_{\cG^{-1}}
\\[2mm]
\ge
\|\bar x^{k+1}-x^{k}\|^{2}_{\cF}
+\beta\sigma\|\bar r^{k+1}\|^{2}
+\|\bar y^{k+1}-y^{k}+\alpha\cG^{-1}d_y^{k-1}\|_{\cG}^{2},\quad\forall k\ge 1,
\end{array}
\end{equation}
where for any $(x,y,z)\in\cX\times\cY\times\cZ$, we define
$$
\begin{array}{ll}
\bar\phi_k(x,y,z):=
&
\frac{1}{\tau\sigma}\|z-\bar z^{k}\|^{2}
+\|x-\bar x^{k}\|_{\hat\Sigma_{f}+\cS}^{2}
+\|y-\bar y^{k}\|_{\hat\Sigma_{g}+\cT}^{2}
\\[3mm]
&\disp+\sigma\|\cA^*x+\cB^*\bar y^{k} -c\|^{2}
+\hat\alpha\sigma\norm{\bar{r}^k}^{2}
+\alpha\norm{y^{k-1}-\bar y^{k}}_{\hat\Sigma_g+\cT}^2,\quad\forall k\ge 1.
\end{array}
$$
Now, define the sequences $\{\xi^{k}\}$ and $\{\bar \xi^{k}\}$ in $\cZ\times\cX\times\cY\times\cZ\times\cY$
for $k\ge 1$ by
$$
\left\{
\begin{array}{l}
\xi^{k}:=\left(
\sqrt{\tau\sigma}
z^{k}_{e},
(\hat\Sigma_{f}+\cS)^{\frac{1}{2}}x^{k}_{e},
\cN^{\frac{1}{2}}y^{k}_{e},
\sqrt{\hat\alpha\sigma}r^{k},
\sqrt{\alpha}(\hat\Sigma_g+\cT)^{\frac{1}{2}}(y^{k-1}-y^k)
\right),
\\[3mm]
\bar\xi^{k}:=\left(
\sqrt{\tau\sigma}\bar z^{k}_{e},
(\hat\Sigma_{f}+\cS)^{\frac{1}{2}}\bar x^{k}_{e},\cN^{\frac{1}{2}}\bar y^{k}_e,
\sqrt{\hat\alpha\sigma}\bar r^{k},
\sqrt{\alpha}(\hat\Sigma_g+\cT)^{\frac{1}{2}}(y^{k-1}-\bar y^{k})
\right).
\end{array}
\right.
$$
Obviously we have $\|\xi_{k}\|^{2}=\phi_{k}(\bar w)$ and $\|\bar \xi_{k}\|^{2}=\bar \phi_{k}(\bar w)$.
This, together with
\eqref{ineq:contraction2} implies that
$
\|\bar\xi^{k+1}\|^{2}
\le
\|\xi^{k}\|^{2}
+\alpha^{2}\|\cG^{-\frac{1}{2}} d_y^{k-1}\|^{2}
$.
As a result, it holds that
$\|\bar\xi^{k+1}\|
\le\|\xi^{k}\|+\alpha\|\cG^{-\frac{1}{2}} d_y^{k-1}\|$.
Consequently, one obtains that
\[
\label{ineq:xik+1}
\|\xi^{k+1}\|
\le
\|\xi^{k}\|
+\alpha\|\cG^{-\frac{1}{2}} d_y^{k-1}\|+
\|\bar\xi^{k+1}-\xi^{k+1}\|.
\]
Next, we estimate $\|\bar\xi^{k+1}-\xi^{k+1}\|$ in \eqref{ineq:xik+1}.
From Lemma \ref{lem:fgpd} we know that $\hat{\alpha} + \tau \in [1,2]$, so that
$$
\begin{array}{l}
\frac{1}{\tau\sigma}\norm{\bar z^{k+1}-z^{k+1}}^2
+{\hat\alpha\sigma}\norm{\bar r^{k+1}-r^{k+1}}^2
=(\tau+\hat{\alpha})\sigma\norm{\bar r^{k+1}-r^{k+1}}^2
\\[2mm]
\le
2\sigma
\norm{\cA^*(\bar x^{k+1}-x^{k+1})+\cB^*(\bar y^{k+1}-y^{k+1})}^2
\le
4\norm{\bar x^{k+1}-x^{k+1}}_{\sigma\cA\cA^*}^{2}
+4\norm{\bar y^{k+1}-y^{k+1}}_{\sigma\cB\cB^{*}}^2.
\end{array}
$$
This, together with Proposition \ref{prop:error}, implies that
\[
\label{ineq:xixi}
\begin{array}{l}
\norm{\bar \xi^{k+1}-\xi^{k+1}}^2
\\[2mm]
\le
\norm{\bar x^{k+1}-x^{k+1}}^2_{\hat\Sigma_{f}+\cS}
+\norm{\bar y^{k+1}-y^{k+1}}^2_{\cN}
+\norm{\bar y^{k+1}-y^{k+1}}^2_{\hat\Sigma_g+\cT}
\\[2mm]
\quad +4\norm{\bar x^{k+1}-x^{k+1}}_{\sigma\cA\cA^*}^{2}
+4\norm{\bar y^{k+1}-y^{k+1}}_{\sigma\cB\cB^{*}}^2
\\[2mm]
\leq
5(\norm{\bar x^{k+1}-x^{k+1}}_{\cM}^{2}
+\norm{\bar y^{k+1}-y^{k+1}}_{\cN}^{2})
\le
\varrho^2 \varepsilon_k^{2},
\end{array}
\]
where $\varrho$ is a positive constant defined by
$
\varrho:=\sqrt{5(1+(1+\sigma\|\cN^{-\frac{1}{2}}\cB\cA^{*}\cM^{-\frac{1}{2}}\|)^2)}$.
On the other hand, from Proposition \ref{prop:error}, we know that $\norm{\cG^{-\frac{1}{2}} d_y^{k}}\le\norm{\cG^{-\frac{1}{2}}\cN^{\frac{1}{2}}} \varepsilon_{k}$.
By using this fact together with \eqref{ineq:xik+1} and \eqref{ineq:xixi}, one can get
\[
\label{ineq:xibound}
\begin{array}{rl}
\norm{\xi^{k+1}}
\le \norm{\xi^k}
+\varrho\varepsilon_{k}
+\norm{\cG^{-\frac{1}{2}}\cN^{\frac{1}{2}}}\varepsilon_{k-1}
\le\|\xi^{1}\|+\left(\varrho+ \norm{\cG^{-\frac{1}{2}}\cN^{\frac{1}{2}}}\right)\cE,
\end{array}
\]
which implies that the sequence $\{\xi^{k}\}$ is bounded. Hence by \eqref{ineq:xixi}, we know that the sequence $\{\bar \xi^{k}\}$ is also bounded.
From the definition of $\xi^{k}$, we know that the sequences $\{y^{k}\}$, $\{z^{k}\}$, $\{ r^k\}$ and $\{(\hat\Sigma_{f}+\cS)^{\frac{1}{2}}x^{k}\}$ are bounded.
Thus, by the definition of $r^k$, we know that the sequence $\{Ax^k\}$ is also bounded, which together with the definition of $\cM$ and the fact that $\cM\succ 0$, implies that $\{x^k\}$ is bounded.

By \eqref{ineq:contraction2}, \eqref{ineq:xixi} and \eqref{ineq:xibound} we have that
\[
\label{ineq:summable}
\begin{array}{l}
\disp
\sum_{k=1}^\infty
\left(
\norm{\bar{x}^{k+1}-x^{k}}^{2}_{\cF}
+\beta\sigma\norm{\bar r^{k+1}}^{2}
+\norm{\bar{y}^{k+1}-y^{k}+\alpha\cG^{-1}d_y^{k-1}}_{\cG}^{2}\right)
\\[3mm]
\disp
\leq
\sum_{k=1}^\infty \left(\phi_{k}(\bar w) - \phi_{k+1}(\bar w)
+\phi_{k+1}(\bar w) -\bar \phi_{k+1}(\bar w)
+\alpha^{2}\|d_y^{k-1}\|^{2}_{\cG^{-1}}\right)
\\[3.5mm]
\disp
\leq
\phi_{1}(\bar w)
+\mbox{$\sum_{k=1}^\infty$} \norm{\xi^{k+1}-\bar{\xi}^{k+1}}
\left(\norm{\xi^{k+1}}+\norm{\bar{\xi}^{k+1}}\right)
+\norm{\cG^{-\frac{1}{2}}\cN^{\frac{1}{2}}}^2\cE'
\\[2mm]
\disp
\leq
\phi_{1}(\bar w)
+\norm{\cG^{-\frac{1}{2}}\cN^{\frac{1}{2}}}^2\cE'
+\varrho \, { \max\limits_{k\geq 1} }\{ \norm{\xi^{k+1}}+\norm{\bar{\xi}^{k+1}}\}\cE
<\infty,
\end{array}
\]
where we have used the fact that $\phi_{k}(\bar w) -\bar \phi_{k}(\bar w)
\leq \norm{\xi^{k}-\bar{\xi}^{k}}(\norm{\xi^{k}}+\norm{\bar{\xi}^{k}})$.
By \eqref{ineq:summable}, we know that
$\lim_{k\to\infty}\norm{\bar{x}^{k+1}-x^{k}}^{2}_{\cF}= 0$,
$\lim_{k\to\infty}\norm{\bar{y}^{k+1}-y^{k}+\alpha\cG^{-1}d_y^{k-1}}_{\cG}^2= 0$,
and
$\lim_{k\to\infty}\norm{\bar r^{k+1}}^{2}=0$.
Then, by $\cF\succ 0$ and $\cG\succ 0$, we have that
$\{\bar{x}^{k+1}-x^{k}\}\to 0$, $\{\bar{y}^{k+1}-y^{k}\}\to 0$
and $\{\bar{r}^{k}\}\to 0$ as $k\to\infty$.
Also, due to the fact that $\cM\succ 0$ and $\cN\succ 0$, by Proposition \ref{prop:error} we know that
$\{\bar{x}^{k}-x^{k}\}\to 0$ and $\{\bar{y}^{k}-y^{k}\}\to 0$ as $k\to\infty$.
As a result, it holds that
$\{x^{k}-x^{k+1}\}\to0$,
$\{y^{k}-y^{k+1}\}\to 0$,
and $\{r^{k}\}\to 0$ as $k\to\infty$.
Note that the sequence $\{(x^{k+1},y^{k+1},z^{k+1})\}$ is bounded. Thus, it has a convergent subsequence $\{(x^{k_{i}+1},y^{k_{i}+1},z^{k_{i}+1})\}$ which converges to a point, say
$(x^{\infty},y^{\infty},z^{\infty})\in\cX\times\cY\times\cZ$.
We define two nonlinear mappings $F:\cX\times\cY\times\cZ\to\cX$ and $G:\cX\times\cY\times\cZ\to\cZ$ by
$$
F(w):=\partial p(x)+\nabla f(x)+\cA z\quad\mbox{and}\quad G(w):=\partial q(y)+\nabla g(y)+\cB z,\quad \forall (x,y,z)\in\cX\times\cY\times\cZ.
$$
Note that for any $k\ge 0$,
\[
\label{opt:y}
\begin{array}{l}
d_{y}^k-\nabla g(y^k)-\cB\widetilde z^{k+1}+(\hat\Sigma_g+\cT)y^k-y^{k+1}\in\partial q(y^{k+1}).
\end{array}
\]
Since that Assumption \ref{ass}{\rm (ii)} holds, by Clarke's Mean Value Theorem \cite[Proposition 2.6.5]{clarke}, we know that for any $k\ge 1$, there exist two self-adjoint linear operators $0\preceq \cP_{x}^{k}\preceq\hat\Sigma_{f}$ and $0\preceq\cP_{y}^{k}\preceq \hat\Sigma_{g}$ such that
\[
\label{eq:meanvalue-y}
\nabla f(x^{k-1})-\nabla f(x^{k})=\cP_{x}^{k}(x^{k-1}-x^k)
\quad\mbox{and}\quad
\nabla g(y^{k-1})-\nabla g(y^{k})=\cP_{y}^{k}(y^{k-1}-y^k).
\]
Note that \eqref{inclu:subx} holds. Then, by \eqref{opt:y} and \eqref{eq:meanvalue-y} we know that for all $k\ge 1$,
\[
\left\{
\label{opt:2sub}
\begin{array}{r}
d_{x}^{k}-\cP_x^{k+1}(x^k-x^{k+1})
+(\hat\Sigma_{f}+\cS)(x^k-x^{k+1})
+(\tau-1)\sigma\cA r^{k+1}
-\sigma\cA\cB^{*}(y^{k}-y^{k+1})
\qquad
\\[2mm]
\in F(w^{k+1}),
\\[2mm]
d_{y}^k
-\cP^{k+1}_yy^k-y^{k+1}
+(\hat\Sigma_g+\cT)y^k-y^{k+1}
+(\tau-1)\sigma\cB r^{k+1}
\in G(w^{k+1}).
\end{array}
\right.
\]
By taking limits along $\{k_i\}$ as $i\to\infty$ in \eqref{opt:2sub}, we know that
$$
\begin{array}{l}
0\in\partial p(x^\infty)+\nabla f(x^\infty)+\cA z^\infty
\quad\mbox{and}\quad
0\in\partial q(y^\infty)+\nabla g(y^\infty)+\cB z^\infty,
\end{array}
$$
which together with the fact that $\lim_{k\to\infty}r^k= 0$ implies that $(x^\infty, y^\infty, z^\infty)\in\overline\cW$.
Hence, $(x^{\infty},y^{\infty})$ is a solution to problem \eqref{problem:primal} and $z^{\infty}$ is a solution to the dual of problem \eqref{problem:primal}.

By \eqref{ineq:xibound} and Lemma \ref{lemma:sq-sum}, we know that the sequence $\{\|\xi^k\|\}$ is convergent. We can let $\bar w=(x^\infty, y^\infty, z^\infty)$ in all the previous discussions. In this case, $\lim_{k\to \infty} \|\xi^k\| = 0$.
Thus, from the definition of $\{\xi^k\}$, we know that
$$\lim_{k\to\infty}z^{k} =z^\infty,
\quad
\lim_{k\to\infty}y^{k} =y^\infty,
\quad
\mbox{and}
\quad\lim_{k\to\infty} (\hat\Sigma_f+\cS)x^{k} =(\hat\Sigma_f+\cS)x^\infty.
$$
Since that $\lim_{k\to \infty} r^k=0$, it holds that $\{\cA^*x^k\}\to \cA^*x^\infty$ as $k\to\infty$.
Consequently, from the definition of $\cM$ and the fact that $\cM\succ 0$, we can get
$\lim_{k\to\infty}x^{k}=x^{\infty}$, which completes the proof.
\qed
\end{proof}

\end{document}